\theoremstyle{plain}
\newtheorem{theorem}{Theorem}
\newtheorem{lemma}{Lemma}
\theoremstyle{definition}
\theoremstyle{remark}
\numberwithin{equation}{section} 
\begin{document}
\title[Half-Inverse Problem]{A half-inverse problem for the Singular Diffusion Operator with Jump Conditions} 

\author{ Abdullah ERG\"{U}N}
\address{Cumhuriyet University\\ Vocational School of Sivas\\ Sivas.\\ 58140
  \\ Turkey}

\email{aergun@cumhuriyet.edu.tr}

\begin{abstract}
In this paper, half inverse spectral problem for diffusion operator with jump conditions  dependent on the spectral parameter and  discontinuoty coefficient is considered. The half inverse problems is studied of determining the coefficient and two potential functions of the boundary value problem its spectrum by Hocstadt- Lieberman and Yang-Zettl methods. We show that two potential functions on the whole interval and the parameters in the boundary and jump conditions can be determined from spectrum.
\end{abstract}


\subjclass[2010]{34K08, 34L05, 34K06, 34L10, 34E05}

\keywords{Differential equations, Discontinuous function, Singular Diffusion operator.}

\maketitle

\section{ Introduction and preliminaries.}

 We consider the boundary value problem of the form
\begin{equation} \label{1)} 
l\left(y\right):=-y''+\left[2\lambda p\left(x\right)+q\left(x\right)\right]y=\lambda ^{2} \delta \left(x\right)y,\, \, x\in \left[0,\pi \right]/\left\{a_{1} ,a_{2} \right\} 
\end{equation} 
with the boundary conditions
\begin{equation} \label{2)} 
y'\left(0\right)=0,y\left(\pi \right)=0 
\end{equation} 
and the jump conditions
\begin{equation} \label{3)} 
y\left(a_{1} +0\right)=\alpha _{1} y\left(a_{1} -0\right) 
\end{equation} 
\begin{equation} \label{4)} 
y'\left(a_{1} +0\right)=\beta _{1} y'\left(a_{1} -0\right)+i\lambda \gamma _{1} y\left(a_{1} -0\right) 
\end{equation} 
\begin{equation} \label{5)} 
y\left(a_{2} +0\right)=\alpha _{2} y\left(a_{2} -0\right) 
\end{equation} 
\begin{equation} \label{6)} 
y'\left(a_{2} +0\right)=\beta _{2} y'\left(a_{2} -0\right)+i\lambda \gamma _{2} y\left(a_{2} -0\right) 
\end{equation} 
Where $\lambda $ is a spectral parameter, $p(x)\in W_{2}^{1} \left[0,\pi \right]$, $q(x)\in L_{2} \left[0,\pi \right]$ are real valued functions, $a_{1} \in \left[0,\frac{\pi }{2} \right]$, $a_{2} \in \left[\frac{\pi }{2} ,\pi \right]$ , $\alpha _{1} ,\alpha _{2} ,\gamma _{1} ,\gamma _{2} $ are real numbers,  $\left|\alpha _{i} -1\right|^{2} +\gamma _{i} ^{2} \ne 0\, \, \left(\alpha _{i} >0;i=1,2\right)$, $\beta _{i} =\frac{1}{\alpha _{i} } \left(i=1,2\right)$ and\\ $\delta \left(x\right)=\left\{\begin{array}{l} {\alpha ^{2} ,\, \, \, \, x\in \left(0,\frac{\pi }{2} \right)} \\ {\beta ^{2} ,\, \, \, \, x\in \left(\frac{\pi }{2} ,\pi \right)} \end{array}\right. $  where   $0<\alpha <\beta <1$,$\alpha +\beta >1$. 

\noindent The inverse problems consist in recoverint the coefficients of an operator from their spectral characteristics. A lot of study were done the inverse spectral problem for Sturm-Liouville operators and diffusion operators \cite{Acan,Gala,Amirov,Carlson,Ergün-1,F.Yang-1,Gesztes,Hryniv,Huang,Keldysh,Levin,Markus,Sakhnovich,Yang,Yang-1,Yurko,alpay,hald,hochstadt,Gala-1,koyunbakan,levitan,ozkan,wei,10}. The first results an inverse problems theory of Sturm-Liouville operators where given by Ambarzumyan $\left[2\right]$. The half inverse problems for Sturm-Liouville equations; the known potential in half interval is determined by the help of a one spectrum over the interval. First the obtained results the half inverse problem by Hochstadt and Lieberman  \cite{hochstadt}. They proved that spectrum of the problem
\[-y''+q\left(x\right)y=\lambda y,\, \, x\in \left[0,1\right]\] 
\[y'\left(0\right)-hy\left(0\right)=0\] 
\[y'\left(1\right)+Hy\left(1\right)=0\] 
and potential $q\left(x\right)$ on the $\left(\frac{1}{2} ,1\right)$uniquely determine the potential $q\left(x\right)$ on the whole interval $\left[0,1\right]$ almost everywhere. Hald  \cite{hald} proved similar results in the case when there exists a impulsive conditions inside the interval. Many studies have been done by different authours for half invers problems using this methods \cite{koyunbakan,Sakhnovich}. In the work \cite{Sakhnovich} studied the existence of the solution for he half-inverse problem of Sturm-Liouville problems and gave method of reconstructing this solution under same conditions by Sakhnovich  $\left[16\right]$. Recently, same new uniqueness results on the inverse or half inverse spectral analysis of differential operators have been given. Koyunbakan and Panakhov \cite{koyunbakan} proved the half inverse problem for diffusion operator on the finite interval $\left[0,\pi \right]$. Ran Zhang, Xiao-Chuan Xu, Chuan-Fu Yang and Natalia Pavlovna Bondarenko, proved the determination of the impulsive Sturm-Liouville operator from a set
of eigenvalues  \cite{10}  .

\noindent Purpose of this study is to prove half inverse problem by using the Hocstadt- Lieberman and Yang-Zettl methods for the following equations
\begin{equation} \label{7)} 
\tilde{l}\left(y\right):=-y''+\left[2\lambda \tilde{p}\left(x\right)+\tilde{q}\left(x\right)\right]y=\lambda ^{2} \tilde{\delta }\left(x\right)y,\, \, x\in \left[0,\pi \right]/\left\{a_{1} ,a_{2} \right\} 
\end{equation} 
\begin{equation} \label{8)} 
y'\left(0\right)=0,y\left(\pi \right)=0 
\end{equation} 
\begin{equation} \label{9)} 
y\left(a_{1} +0\right)=\tilde{\alpha }_{1} y\left(a_{1} -0\right) 
\end{equation} 
\begin{equation} \label{10)} 
y'\left(a_{1} +0\right)=\tilde{\beta }_{1} y'\left(a_{1} -0\right)+i\lambda \tilde{\gamma }_{1} y\left(a_{1} -0\right) 
\end{equation} 
\begin{equation} \label{11)} 
y\left(a_{2} +0\right)=\tilde{\alpha }_{2} y\left(a_{2} -0\right) 
\end{equation} 
\begin{equation} \label{12)} 
y'\left(a_{2} +0\right)=\tilde{\beta }_{2} y'\left(a_{2} -0\right)+i\lambda \tilde{\gamma }_{2} y\left(a_{2} -0\right). 
\end{equation}

\begin{lemma}\label{lem:1}  Let $p\left(x\right)\in W_{2}^{1} \left(0,\pi \right)$ ,$q\left(x\right)\in L_{2} \left(0,\pi \right)$. \textbf{$M\left(x,t\right)$ },\textbf{$N\left(x,t\right)$} are summable functions on $\left[0,\pi \right]$ such that the representation for each $x\in \left[0,\pi \right]/\left\{a_{1} ,a_{2} \right\}$. $\; \varphi \left(x,\lambda \right)$ solution of the equations $\left(1.1\right)$ , providing boundary conditions $\left(1.2\right)$  and discontinuity conditions $\left(1.3\right)-\left(1.6\right)$
\[\varphi \left(x,\lambda \right)=\varphi _{0} \left(x,\lambda \right)+\int _{0}^{x}M\left(x,t\right) \cos \lambda tdt+\int _{0}^{x}N\left(x,t\right) \sin \lambda tdt\] 
is satisfied, 

\noindent for $0<x<\frac{\pi }{2} $,  
\begin{equation} \label{13)} 
\begin{array}{l} {\varphi _{0} \left(x,\lambda \right)=} \\ {\left(\beta _{1} ^{+} +\frac{\gamma _{1} }{2\alpha } \right)\cos \left[\lambda \xi ^{+} \left(x\right)-\frac{1}{\alpha } \int _{a_{1} }^{x}p\left(t\right)dt \right]+\left(\beta _{1} ^{-} -\frac{\gamma _{1} }{2\alpha } \right)\cos \left[\lambda \xi ^{-} \left(x\right)+\frac{1}{\alpha } \int _{a_{1} }^{x}p\left(t\right)dt \right]} \end{array} 
\end{equation} 
for $\frac{\pi }{2} <x\le \pi $,
\begin{equation} \label{14)} 
\begin{array}{l} {\varphi _{0} \left(x,\lambda \right)=\left(\beta _{2} ^{+} +\frac{\gamma _{2} }{2\beta } \right)\cos \left[\lambda k^{+} \left(\pi \right)-\frac{1}{\beta } \int _{a_{2} }^{\pi }p\left(t\right)dt \right]} \\ {+\left(\beta _{2} ^{-} +\frac{\gamma _{2} }{2\beta } \right)\cos \left[\lambda k^{-} \left(\pi \right)-\frac{1}{\beta } \int _{a_{2} }^{\pi }p\left(t\right)dt \right]} \\ {+\left(\beta _{2} ^{-} -\frac{\gamma _{2} }{2\beta } \right)\cos \left[\lambda s^{+} \left(\pi \right)+\frac{1}{\beta } \int _{a_{2} }^{\pi }p\left(t\right)dt \right]} \\ {+\left(\beta _{2} ^{+} -\frac{\gamma _{2} }{2\beta } \right)\cos \left[\lambda s^{-} \left(\pi \right)+\frac{1}{\beta } \int _{a_{2} }^{\pi }p\left(t\right)dt \right]} \end{array} 
\end{equation} 
where $\xi ^{\pm } \left(x\right)=\pm \alpha x\mp \alpha a_{1} +a_{1} $ , $k^{\pm } \left(x\right)=\xi ^{+} \left(a_{2} \right)\pm \beta x\mp \beta a_{2} $,\\ $s^{\pm } \left(x\right)=\xi ^{-} \left(a_{2} \right)\pm \beta x\mp \beta a_{2} $,$\beta _{1} ^{\mp } =\frac{1}{2} \left(\alpha _{1} \mp \frac{\beta _{1} }{\alpha } \right)$ , $\beta _{2} ^{\mp } =\frac{1}{2} \left(\alpha _{2} \mp \frac{\alpha \beta _{2} }{\beta } \right)$ .

\noindent Thus, following the relations hold;

\noindent If $p\left(x\right)\in W_{2}^{2} \left(0,\pi \right),\, q\left(x\right)\in W_{2}^{1} \left(0,\pi \right)$
\[\left\{\begin{array}{l} {\frac{\partial ^{2} M\left(x,t\right)}{\partial x^{2} } -\rho \left(x\right)\frac{\partial ^{2} M\left(x,t\right)}{\partial t^{2} } =2p\left(x\right)\frac{\partial N\left(x,t\right)}{\partial t} +q\left(x\right)M\left(x,t\right)} \\ {\frac{\partial ^{2} N\left(x,t\right)}{\partial x^{2} } -\rho \left(x\right)\frac{\partial ^{2} N\left(x,t\right)}{\partial t^{2} } =-2p\left(x\right)\frac{\partial M\left(x,t\right)}{\partial t} +q\left(x\right)N\left(x,t\right)} \end{array}\right. \, \] 
\[M\left(x,\varsigma ^{+} \left(x\right)\right)\cos \frac{\beta \left(x\right)}{\alpha } +N\left(x,\varsigma ^{+} \left(x\right)\right)\sin \frac{\beta \left(x\right)}{\alpha } =\left(\beta _{1} ^{+} +\frac{\gamma _{1} }{2\alpha } \right)\int _{0}^{x}\left(q\left(t\right)+\frac{p^{2} \left(t\right)}{\alpha ^{2} } \right) dt\, \] 
\[M\left(x,\varsigma ^{+} \left(x\right)\right)\sin \frac{\beta \left(x\right)}{\alpha } -N\left(x,\varsigma ^{+} \left(x\right)\right)\cos \frac{\beta \left(x\right)}{\alpha } =\left(\beta _{1} ^{+} +\frac{\gamma _{1} }{2\alpha } \right)\left(p\left(x\right)-p\left(0\right)\right)\, \] 
\[\begin{array}{l} {M\left(x,k^{+} \left(x\right)+0\right)-M\left(x,k^{+} \left(x\right)-0\right)=} \\ {-\left(\beta _{2} ^{+} +\frac{\gamma _{2} }{2\beta } \right)\left(p\left(x\right)-p\left(0\right)\right)\, \sin \frac{\omega \left(x\right)}{\beta } -\left(\beta _{2} ^{+} +\frac{\gamma _{2} }{2\beta } \right)\int _{0}^{x}\left(q\left(t\right)+\frac{p^{2} \left(t\right)}{\beta ^{2} } \right) dt\, \cos \frac{\omega \left(x\right)}{\beta } } \end{array}\] 
\[\begin{array}{l} {N\left(x,k^{+} \left(x\right)+0\right)-N\left(x,k^{+} \left(x\right)-0\right)=} \\ {\left(\beta _{2} ^{+} +\frac{\gamma _{2} }{2\beta } \right)\left(p\left(x\right)-p\left(0\right)\right)\, \cos \frac{\omega \left(x\right)}{\beta } -\left(\beta _{2} ^{+} +\frac{\gamma _{2} }{2\beta } \right)\int _{0}^{x}\left(q\left(t\right)+\frac{p^{2} \left(t\right)}{\beta ^{2} } \right) dt\, \sin \frac{\omega \left(x\right)}{\beta } } \end{array}\] 

\[\left. \frac{\partial M\left(x,t\right)}{\partial t} \right|_{t=0} =N\left(x,0\right)=0\] 
where $\beta \left(x\right)=\int _{0}^{x}p\left(t\right)dt $,$\omega \left(x\right)=\int _{a_{2} }^{x}p\left(t\right)dt +\int _{0}^{a_{1} }p\left(t\right)dt $.

\noindent The proof is done as in \cite{Ergün-1}.
\end{lemma}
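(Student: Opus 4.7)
The plan is to follow a transformation operator (Marchenko type) approach adapted to the singular diffusion operator with the two internal jump points $a_{1},a_{2}$ and the interior discontinuity of the coefficient $\delta(x)$ at $\pi/2$. I would split the argument into three stages: (i) explicit construction of the free part $\varphi_{0}$, (ii) derivation of the hyperbolic system and Goursat-type data for the kernels $M,N$ from the integral ansatz, and (iii) existence of $M,N$ via successive approximations.

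\textbf{Stage (i).} I would solve the unperturbed equation $-y''=\lambda^{2}\delta(x)y$ with $y(0)=1,\;y'(0)=0$ piece by piece. On $(0,a_{1})$ the solution is $\cos(\lambda\alpha x)$. The jump relations (1.3)--(1.4) split this into two cosines on $(a_{1},\pi/2)$ whose arguments are $\lambda\xi^{\pm}(x)$; grouping by the factors $\alpha_{1}\pm\beta_{1}/\alpha$ and $\gamma_{1}/(2\alpha)$ yields precisely (1.13). Continuing across $x=\pi/2$ (where $\alpha$ is replaced by $\beta$ in the phase) and then across $a_{2}$, the jump conditions (1.5)--(1.6) split each of the two surviving cosines once more, producing the four terms in (1.14) with characteristics $k^{\pm}(\pi),s^{\pm}(\pi)$. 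This stage is pure bookkeeping of linear combinations of trigonometric functions.

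\textbf{Stage (ii).} Plugging the ansatz $\varphi=\varphi_{0}+\int_{0}^{x}M(x,t)\cos\lambda t\,dt+\int_{0}^{x}N(x,t)\sin\lambda t\,dt$ into (1.1) and replacing $\lambda^{2}\cos\lambda t,\lambda^{2}\sin\lambda t$ by $-\partial_{tt}\cos\lambda t,-\partial_{tt}\sin\lambda t$, I would integrate by parts twice. Because $M,N$ will be discontinuous across the characteristic rays $t=\xi^{\pm}(x),k^{\pm}(x),s^{\pm}(x)$, the boundary terms generated by the integration by parts carry jump contributions along those rays; matching these against the cosine/sine coefficients in $\varphi_{0}$ and using completeness of $\{\cos\lambda t,\sin\lambda t\}$ on $[0,x]$ gives the hyperbolic system
\[
M_{xx}-\rho(x)M_{tt}=2p(x)N_{t}+q(x)M,\qquad N_{xx}-\rho(x)N_{tt}=-2p(x)M_{t}+q(x)N,
\]
together with the Goursat data on the four characteristics (the two pairs of identities in the statement involving $\varsigma^{+}(x)$ and $k^{+}(x)\pm 0$) and the natural condition $M_{t}(x,0)=N(x,0)=0$ coming from $y'(0)=0$.

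\textbf{Stage (iii).} On each sector of the $(x,t)$-plane cut out by the characteristic rays emanating from $(0,0),(a_{1},a_{1}),(\pi/2,\pi/2),(a_{2},a_{2})$, the system reduces to a standard Goursat problem with $L^{2}$ data. Under $p\in W_{2}^{2},\,q\in W_{2}^{1}$, the usual Riemann-function representation converts it into a Volterra integral equation whose successive approximations converge uniformly, producing summable $M,N$ on $[0,\pi]$. I expect the main obstacle to be the bookkeeping in Stage (ii): with two jumps and the interior discontinuity of $\delta$, there are six distinct characteristic rays along which $M,N$ may be discontinuous, and one must verify that the six jump magnitudes are consistent with the explicit coefficients $\beta_{1}^{\pm}\pm\gamma_{1}/(2\alpha)$ and $\beta_{2}^{\pm}\pm\gamma_{2}/(2\beta)$ appearing in $\varphi_{0}$. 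This compatibility calculation, rather than the existence theory itself, is the technical core, and is precisely what is carried out in \cite{Ergün-1} for the analogous operator, which justifies the author's reference.
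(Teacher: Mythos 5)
Your three-stage plan (explicit principal part, hyperbolic system plus Goursat data from the integral ansatz, existence by successive approximations for the resulting Volterra/Goursat problem) is exactly the transformation-operator construction that the paper defers to: the paper gives no argument of its own beyond the citation of \cite{Ergün-1}, and that reference proceeds along the lines you describe. Stages (ii) and (iii) are sound in outline, and you correctly identify the jump magnitudes of $M,N$ across the characteristic rays as the technical core.

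There is, however, one concrete gap in Stage (i). You propose to obtain $\varphi_{0}$ by solving the unperturbed equation $-y''=\lambda^{2}\delta(x)y$, which on $(0,a_{1})$ gives $\cos(\lambda\alpha x)$ and, after propagation through the jump conditions, cosines with arguments $\lambda\xi^{\pm}(x)$, $\lambda k^{\pm}(\pi)$, $\lambda s^{\pm}(\pi)$ \emph{without} the phase shifts $\mp\frac{1}{\alpha}\int_{a_{1}}^{x}p\,dt$ and $\mp\frac{1}{\beta}\int_{a_{2}}^{\pi}p\,dt$ that appear in (1.13)--(1.14). For the quadratic pencil the term $2\lambda p(x)y$ is of first order in $\lambda$ and cannot be absorbed into summable kernels tested against $\cos\lambda t$, $\sin\lambda t$: after two integrations by parts the kernel integrals contribute only $O(1)$ interior terms and boundary terms supported on the characteristics, so an unmatched $O(\lambda)$ residue $2\lambda p(x)\cos(\lambda\xi^{\pm}(x))$ survives. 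The standard remedy, and the one implicit in the stated Goursat data (note the factors $\cos\frac{\beta(x)}{\alpha}$, $\sin\frac{\beta(x)}{\alpha}$ with $\beta(x)=\int_{0}^{x}p$), is to remove this term by a Liouville/WKB-type phase extraction, i.e.\ to take as principal part the oscillatory solutions $\exp\bigl(\pm i(\lambda\xi^{\pm}(x)\mp\frac{1}{\alpha}\int_{a_{1}}^{x}p)\bigr)$ of $-y''+2\lambda py=\lambda^{2}\delta y$ to leading order, and only then propagate through (1.3)--(1.6). Without this correction your Stage (i) does not produce (1.13)--(1.14), and Stages (ii)--(iii) would then be matching against the wrong free term; with it, your plan coincides with the cited construction.
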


\textbf{Definition.} The function $\Delta \left(\lambda \right)$ is called the characteristic function of the eigenvalues $\left\{\lambda _{n} \right\}$of the problem $\left(1.1\right)-\left(1.6\right)$. $\tilde{\Delta }\left(\lambda \right)$ is called the characteristic function of the eigenvalues $\left\{\tilde{\lambda }_{n} \right\}$of the problem $\left(1.7\right)-\left(1.12\right)$. 

\noindent Let $\lambda =s^{2} ,s=\sigma +i\tau \, ,\, \sigma ,\tau \in {\rm R}$. The solution $\varphi \left(x,\lambda \right)$ of $\left(1.1\right)-\left(1.6\right)$ have the following asymptotic formulas hold on for $\left|\lambda \right|\to \infty $,

\noindent for $0<x<\frac{\pi }{2} $, 
\[\varphi \left(x,\lambda \right)=\frac{1}{2} \left(\frac{\alpha _{1} }{2} \mp \frac{\beta _{1} }{2\alpha } +\frac{\gamma _{1} }{2\alpha } \right)\exp \left(-i\left(\lambda \xi ^{+} \left(x\right)-\frac{v\left(x\right)}{\alpha } \right)\right)\left(1+O\left(\frac{1}{\lambda } \right)\right)\] 
for $\frac{\pi }{2} <x\le \pi $ ,
\[\varphi \left(x,\lambda \right)=\frac{1}{2} \left(\frac{\alpha _{2} }{2} +\frac{\alpha \beta _{2} }{2\beta } +\frac{\gamma _{2} }{2\beta } \right)\exp \left(-i\left(\lambda k^{+} \left(x\right)-\frac{t\left(x\right)}{\beta } \right)\right)\left(1+O\left(\frac{1}{\lambda } \right)\right).\] 
where $v\left(x\right)=\int _{a_{1} }^{x}p\left(t\right)dt $, $t\left(x\right)=\int _{a_{2} }^{x}p\left(t\right)dt $.

\noindent 

\noindent 

\noindent In this study, if $q\left(x\right)$ and $p\left(x\right)$ to be known almost everywhere $\left(\frac{\pi }{2} ,\pi \right)$,  sufficient to determine uniquely $p\left(x\right)$ and $q\left(x\right)$ whole interval $\left(0,\pi \right)$ .

\section{main result}

\noindent If $\varphi_{0} \left(x,\lambda \right)$ a nontrivial solution of equation $\left(1.1\right)$ with conditions $\left(1.2\right)$-$\left(1.6\right)$, then $\lambda _{0} $ is called eigenvalue. Additionally, $\varphi_{0} \left(x,\lambda \right)$ is called the eigenfunction of the problem corresponding to the eigenvalue $\lambda _{0} $. $\left\{\lambda _{n} \right\}$ are eigenvalues of the problem.
\begin{lemma}\label{lem:1} If $\lambda _{n} =\tilde{\lambda }_{n} $, $\frac{\alpha }{\tilde{\alpha }} =\frac{\beta }{\tilde{\beta }} $ then $\alpha =\tilde{\alpha }$ and $\beta =\tilde{\beta }$ for all $n\in {\rm N}$.
\end{lemma}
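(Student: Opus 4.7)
The plan is to extract the jump coefficients $\alpha$ and $\beta$ from the first-order asymptotics of the eigenvalues, using the fact that they occur in the argument of the dominant exponential of the characteristic function $\Delta(\lambda)=\varphi(\pi,\lambda)$.

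First I would write down $\Delta(\lambda)=\varphi(\pi,\lambda)$ and exploit the asymptotic formula already recorded in the paper for $\pi/2<x\le\pi$:
\[
\varphi(\pi,\lambda)=\tfrac{1}{2}\!\left(\tfrac{\alpha_2}{2}+\tfrac{\alpha\beta_2}{2\beta}+\tfrac{\gamma_2}{2\beta}\right)\exp\!\left(-i\bigl(\lambda k^{+}(\pi)-t(\pi)/\beta\bigr)\right)\bigl(1+O(1/\lambda)\bigr),
\]
valid as $|\lambda|\to\infty$ in the appropriate half-plane. A standard Rouché/counting argument applied to the zeros of $\Delta$ (together with the analogous dominant estimates coming from the remaining cosine terms in the explicit formula for $\varphi_0$) shows that the eigenvalues are asymptotically distributed like
\[
\lambda_n=\frac{n\pi}{k^{+}(\pi)}+O(1),\qquad n\to\infty,
\]
and analogously $\tilde\lambda_n=n\pi/\tilde k^{+}(\pi)+O(1)$, where by the definitions in Lemma \ref{lem:1}
\[
k^{+}(\pi)=\alpha(a_2-a_1)+a_1+\beta(\pi-a_2).
\]

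Next I would use the hypothesis $\lambda_n=\tilde\lambda_n$ for all $n\in\mathbb{N}$. Dividing by $n$ and letting $n\to\infty$ forces $k^{+}(\pi)=\tilde k^{+}(\pi)$, that is,
\[
\alpha(a_2-a_1)+\beta(\pi-a_2)=\tilde\alpha(a_2-a_1)+\tilde\beta(\pi-a_2).
\]
Now introduce the common ratio $r:=\alpha/\tilde\alpha=\beta/\tilde\beta$ from the second hypothesis and substitute $\alpha=r\tilde\alpha$, $\beta=r\tilde\beta$. The previous identity collapses to
\[
(r-1)\bigl[\tilde\alpha(a_2-a_1)+\tilde\beta(\pi-a_2)\bigr]=0.
\]
Since $\tilde\alpha,\tilde\beta>0$ and $a_1\in[0,\pi/2]$, $a_2\in[\pi/2,\pi]$ give $(a_2-a_1)+(\pi-a_2)>0$, the bracket is strictly positive, hence $r=1$ and consequently $\alpha=\tilde\alpha$, $\beta=\tilde\beta$.

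The only nontrivial step is the first one: justifying rigorously that the leading asymptotic $\lambda_n\sim n\pi/k^{+}(\pi)$ really is determined by $k^{+}(\pi)$ alone. Because $\Delta(\lambda)$ is, up to remainders, a sum of four cosines with different phases $k^{\pm}(\pi),s^{\pm}(\pi)$, one must verify that $k^{+}(\pi)$ is the unique largest among these (which follows from $0<\alpha<\beta<1$ and the placement of $a_1,a_2$), so that the zero-counting function of $\Delta$ has density $k^{+}(\pi)/\pi$. Once that dominance is established, the remainder of the argument is the short algebraic manipulation above.
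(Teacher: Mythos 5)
Your argument is correct in outline, but it takes a genuinely different route from the paper. The paper never looks at the eigenvalue counting function: it uses the hypothesis $\lambda_n=\tilde\lambda_n$ together with the Hadamard factorization to write $\Delta(\lambda)\equiv C\,\tilde\Delta(\lambda)$, expands both characteristic functions as sums of four cosines with phases $k^{\pm}(\pi)$, $s^{\pm}(\pi)$ plus $O(1/|\lambda|)$ remainders, and then multiplies by $\cos[\lambda k^{+}(\pi)-w(\pi)/\beta]$ (and afterwards by $\cos[\lambda k^{-}(\pi)-w(\pi)/\beta]$) and averages over $(\varepsilon,T)$ with $T\to\infty$ to isolate the matching Fourier modes; this produces two expressions for $C$, forces $C=1$, and the proportionality hypothesis then closes the argument. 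You instead extract the single dominant phase from the Weyl-type asymptotics $\lambda_n=n\pi/k^{+}(\pi)+O(1)$, and observe that under $\alpha/\tilde\alpha=\beta/\tilde\beta=r$ the one identity $k^{+}(\pi)=\tilde k^{+}(\pi)$ already collapses to $(r-1)\bigl[\tilde\alpha(a_2-a_1)+\tilde\beta(\pi-a_2)\bigr]=0$, whence $r=1$; this is shorter, avoids the constant $C$ entirely, and makes explicit where the proportionality hypothesis is actually needed. Both routes rest on the same underlying fact — the phase of the dominant exponential of $\Delta_0$ is spectrally detectable — and both need the nondegeneracy $\beta_2^{+}+\gamma_2/(2\beta)\neq 0$ of the leading coefficient (the paper divides by it; you need it for the zero density of $\Delta$ to equal $k^{+}(\pi)/\pi$). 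The one step you should not pass over too quickly is the density claim itself: a Rouch\'e comparison against the single dominant cosine is not sufficient on the real axis, since the three subdominant cosines may have combined amplitude exceeding the leading one there; the clean justification counts all zeros in $|\lambda|\le R$ via Jensen/Levinson from the exponential type $k^{+}(\pi)$ in $|{\rm Im}\,\lambda|$, and that is precisely where your observation that $k^{+}(\pi)$ strictly dominates $|k^{-}(\pi)|$ and $|s^{\pm}(\pi)|$ (valid since $a_1<a_2$, $a_2\le\pi$, $\alpha,\beta>0$) enters. With that standard lemma supplied, your proof is complete and, in my view, tidier than the paper's.
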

\begin{proof}

\noindent Since $\lambda _{n} =\tilde{\lambda }_{n} $ and $\Delta \left(\lambda \right),\, \tilde{\Delta }\left(\lambda \right)$are entire functions in $\lambda $ of order one by Hadamard factorization theorem for $\lambda \in {\rm C}$
\[\Delta \left(\lambda \right)\equiv C\, \tilde{\Delta }\left(\lambda \right)\] 
On the other hand, $\left(1.1\right)$ can be written as
\[\Delta _{0} \left(\lambda \right)-C\, \tilde{\Delta }_{0} \left(\lambda \right)=C\left[\tilde{\Delta }\left(\lambda \right)-\, \tilde{\Delta }_{0} \left(\lambda \right)\right]-\left[\Delta \left(\lambda \right)-\, \Delta _{0} \left(\lambda \right)\right]\] 
Hence
\begin{equation} \label{15)} 
\begin{array}{l} {C\left[\tilde{\Delta }\left(\lambda \right)-\, \tilde{\Delta }_{0} \left(\lambda \right)\right]-\left[\Delta \left(\lambda \right)-\, \Delta _{0} \left(\lambda \right)\right]=} \\ {\left(\beta _{2} ^{+} +\frac{\gamma _{2} }{2\beta } \right)\cos \left[\lambda k^{+} \left(\pi \right)-\frac{w\left(\pi \right)}{\beta } \right]+\left(\beta _{2} ^{-} +\frac{\gamma _{2} }{2\beta } \right)\cos \left[\lambda k^{-} \left(\pi \right)-\frac{w\left(\pi \right)}{\beta } \right]} \\ {+\left(\beta _{2} ^{-} -\frac{\gamma _{2} }{2\beta } \right)\cos \left[\lambda s^{+} \left(\pi \right)+\frac{w\left(\pi \right)}{\beta } \right]+\left(\beta _{2} ^{+} -\frac{\gamma _{2} }{2\beta } \right)\cos \left[\lambda s^{-} \left(\pi \right)+\frac{w\left(\pi \right)}{\beta } \right]} \\ {-C\left(\tilde{\beta }_{2} ^{+} +\frac{\tilde{\gamma }_{2} }{2\tilde{\beta }} \right)\cos \left[\lambda k^{+} \left(\pi \right)-\frac{\tilde{w}\left(\pi \right)}{\tilde{\beta }} \right]-C\left(\tilde{\beta }_{2} ^{-} +\frac{\tilde{\gamma }_{2} }{2\tilde{\beta }} \right)\cos \left[\lambda k^{-} \left(\pi \right)-\frac{\tilde{w}\left(\pi \right)}{\tilde{\beta }} \right]} \\ {-C\left(\tilde{\beta }_{2} ^{-} -\frac{\tilde{\gamma }_{2} }{2\tilde{\beta }} \right)\cos \left[\lambda s^{+} \left(\pi \right)+\frac{\tilde{w}\left(\pi \right)}{\tilde{\beta }} \right]-C\left(\tilde{\beta }_{2} ^{+} -\frac{\tilde{\gamma }_{2} }{2\tilde{\beta }} \right)\cos \left[\lambda s^{-} \left(\pi \right)+\frac{\tilde{w}\left(\pi \right)}{\beta } \right]} \end{array} 
\end{equation} 
If we multiply both sides of $\left(2.1\right)$ by $\cos \left[\lambda k^{+} \left(\pi \right)-\frac{w\left(\pi \right)}{\beta } \right]$ and integrate with respect to $\lambda $ in $\left(\varepsilon ,T\right)$, ($\varepsilon $ is sufficiently small positive number) for any positive real number $T$, then we get

\noindent 
\[\begin{array}{l} {\int _{\varepsilon }^{T}\left(C\left[\tilde{\Delta }\left(\lambda \right)-\, \tilde{\Delta }_{0} \left(\lambda \right)\right]-\left[\Delta \left(\lambda \right)-\, \Delta _{0} \left(\lambda \right)\right]\right)\cos \left[\lambda k^{+} \left(\pi \right)-\frac{w\left(\pi \right)}{\beta } \right] d\lambda =} \\ {+\int _{\varepsilon }^{T}\left\{\left(\beta _{2} ^{+} +\frac{\gamma _{2} }{2\beta } \right)\cos \left[\lambda k^{+} \left(\pi \right)-\frac{w\left(\pi \right)}{\beta } \right]+\left(\beta _{2} ^{-} +\frac{\gamma _{2} }{2\beta } \right)\cos \right. \left[\lambda k^{-} \left(\pi \right)-\frac{w\left(\pi \right)}{\beta } \right] } \\ {+\left(\beta _{2} ^{-} -\frac{\gamma _{2} }{2\beta } \right)\cos \left[\lambda s^{+} \left(\pi \right)+\frac{w\left(\pi \right)}{\beta } \right]+\left(\beta _{2} ^{+} -\frac{\gamma _{2} }{2\beta } \right)\cos \left[\lambda s^{-} \left(\pi \right)+\frac{w\left(\pi \right)}{\beta } \right]} \\ {-C\left(\tilde{\beta }_{2} ^{+} +\frac{\tilde{\gamma }_{2} }{2\tilde{\beta }} \right)\cos \left[\lambda k^{+} \left(\pi \right)-\frac{\tilde{w}\left(\pi \right)}{\tilde{\beta }} \right]-C\left(\tilde{\beta }_{2} ^{-} +\frac{\tilde{\gamma }_{2} }{2\tilde{\beta }} \right)\cos \left[\lambda k^{-} \left(\pi \right)-\frac{\tilde{w}\left(\pi \right)}{\tilde{\beta }} \right]} \\ {\left. -C\left(\tilde{\beta }_{2} ^{-} -\frac{\tilde{\gamma }_{2} }{2\tilde{\beta }} \right)\cos \left[\lambda s^{+} \left(\pi \right)+\frac{\tilde{w}\left(\pi \right)}{\tilde{\beta }} \right]-C\left(\tilde{\beta }_{2} ^{+} -\frac{\tilde{\gamma }_{2} }{2\tilde{\beta }} \right)\cos \left[\lambda s^{-} \left(\pi \right)+\frac{\tilde{w}\left(\pi \right)}{\beta } \right]\right\}d\lambda } \end{array}\] 
And so
\[\begin{array}{l} {\int _{\varepsilon }^{T}\left(C\left[\tilde{\Delta }\left(\lambda \right)-\, \tilde{\Delta }_{0} \left(\lambda \right)\right]-\left[\Delta \left(\lambda \right)-\, \Delta _{0} \left(\lambda \right)\right]\right)\cos \left[\lambda k^{+} \left(\pi \right)-\frac{w\left(\pi \right)}{\beta } \right] d\lambda =} \\ {\int _{\varepsilon }^{T}\left(\beta _{2} ^{+} +\frac{\gamma _{2} }{2\beta } \right)\cos ^{2} \left[\lambda k^{+} \left(\pi \right)-\frac{w\left(\pi \right)}{\beta } \right] d\lambda } \\ {-C\int _{\varepsilon }^{T}\left(\tilde{\beta }_{2} ^{+} +\frac{\tilde{\gamma }_{2} }{2\tilde{\beta }} \right)\cos \left[\lambda k^{+} \left(\pi \right)-\frac{w\left(\pi \right)}{\beta } \right]\cos \left[\lambda k^{+} \left(\pi \right)-\frac{\tilde{w}\left(\pi \right)}{\tilde{\beta }} \right] d\lambda } \end{array}\] 
\[\begin{array}{l} {=\int _{\varepsilon }^{T}\frac{1}{2} \left(\beta _{2} ^{+} +\frac{\gamma _{2} }{2\beta } \right)+\frac{1}{2} \left(\beta _{2} ^{+} +\frac{\gamma _{2} }{2\beta } \right)\cos \left[2\lambda k^{+} \left(\pi \right)-\frac{2w\left(\pi \right)}{\beta } \right] d\lambda } \\ {-C\int _{\varepsilon }^{T}\frac{1}{2} \left(\tilde{\beta }_{2} ^{+} +\frac{\tilde{\gamma }_{2} }{2\tilde{\beta }} \right)\left(\cos \left[2\lambda k^{+} \left(\pi \right)-\frac{\tilde{w}\left(\pi \right)+w\left(\pi \right)}{\beta } \right]+\cos \left[\frac{w\left(\pi \right)-\tilde{w}\left(\pi \right)}{\tilde{\beta }} \right]\right) d\lambda } \end{array}\] 
$\Delta \left(\lambda \right)-\Delta _{0} \left(\lambda \right)=O\left(\frac{1}{\left|\lambda \right|} e^{\left|Im\lambda \right|k^{+} \left(\pi \right)} \right)$, $\tilde{\Delta }\left(\lambda \right)-\tilde{\Delta }_{0} \left(\lambda \right)=O\left(\frac{1}{\left|\lambda \right|} e^{\left|Im\lambda \right|k^{+} \left(\pi \right)} \right)$ for all $\lambda $ in $\left(\varepsilon ,T\right)$.
\[\frac{C}{2} \left(\tilde{\beta }_{2} ^{+} +\frac{\tilde{\gamma }_{2} }{2\tilde{\beta }} \right)-\frac{1}{2} \left(\beta _{2} ^{+} +\frac{\gamma _{2} }{2\beta } \right)=O\left(\frac{1}{T} \right)\] 
By letting $T$ tend to infinity we see that
\[C=\frac{\tilde{\beta }_{2} ^{+} +\frac{\tilde{\gamma }_{2} }{2\tilde{\beta }} }{\beta _{2} ^{+} +\frac{\gamma _{2} }{2\beta } } \] 
Similarly, if we multiply both side of $\left(2.1\right)$ $\cos \left[\lambda k^{-} \left(\pi \right)-\frac{w\left(\pi \right)}{\beta } \right]$ and integrate again with respect to $\lambda $ in $\left(\varepsilon ,T\right)$ and by letting $T$ tend to infinity, then we get
\[C=\frac{\tilde{\beta }_{2} ^{-} +\frac{\tilde{\gamma }_{2} }{2\tilde{\beta }} }{\beta _{2} ^{-} +\frac{\gamma _{2} }{2\beta } } \] 
But since $\alpha ,\beta $ and $\tilde{\alpha },\tilde{\beta }$ are positive, since $w^{+} \left(\pi \right)-\tilde{w}^{+} \left(\pi \right)=w^{-} \left(\pi \right)-\tilde{w}^{-} \left(\pi \right)$ we conclude that $C=1$. Hence $\frac{\tilde{\beta }_{2} ^{+} }{\beta _{2} ^{+} } =\frac{\tilde{\beta }_{2} ^{-} }{\beta _{2} ^{-} } $ is obtained. We have therefore proved since $\alpha =\tilde{\alpha }$ that $\beta =\tilde{\beta }$.

\noindent The proof is completed.
\end{proof}

\begin{lemma}\label{lem:1}If $\lambda _{n} =\tilde{\lambda }_{n} $ then $\alpha _{i} =\tilde{\alpha }_{i} $ and $\gamma _{i} =\tilde{\gamma }_{i} $ $\left(i=1,2\right)$for all $n\in {\rm N}$.

\noindent The proof is done as in \cite{Ergün-1}.
\end{lemma}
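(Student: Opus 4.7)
The plan is to mirror the orthogonality/integration strategy of Lemma 2, but now multiplying the basic identity by carefully chosen cosines so as to peel off the coefficients $\alpha_i$ and $\gamma_i$ one at a time.

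First I would invoke Lemma 2 together with Hadamard's factorization: since $\lambda_n=\tilde\lambda_n$ we have $\Delta(\lambda)\equiv C\tilde\Delta(\lambda)$, and Lemma 2 gives $C=1$, $\alpha=\tilde\alpha$, $\beta=\tilde\beta$. Rewriting this as
\[
[\tilde\Delta(\lambda)-\tilde\Delta_{0}(\lambda)]-[\Delta(\lambda)-\Delta_{0}(\lambda)]=\tilde\Delta_{0}(\lambda)-\Delta_{0}(\lambda),
\]
the left-hand side is $O\bigl(|\lambda|^{-1}e^{|\operatorname{Im}\lambda|k^{+}(\pi)}\bigr)$ uniformly. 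Because $p$ and $q$ are known a.e.\ on $(\pi/2,\pi)$ and $a_{2}\in[\pi/2,\pi]$, we also have $w(\pi)=\tilde w(\pi)$; since furthermore $\alpha=\tilde\alpha$ and $\beta=\tilde\beta$, the frequencies $k^{\pm}(\pi),s^{\pm}(\pi)$ coincide with their tilde versions, so only the combinations $\bigl(\beta_{2}^{\pm}\pm\tfrac{\gamma_{2}}{2\beta}\bigr)-\bigl(\tilde\beta_{2}^{\pm}\pm\tfrac{\tilde\gamma_{2}}{2\beta}\bigr)$ remain on the right.

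Next I would multiply this identity in turn by each of the four cosines $\cos[\lambda k^{\pm}(\pi)-w(\pi)/\beta]$ and $\cos[\lambda s^{\pm}(\pi)+w(\pi)/\beta]$, integrate over $(\varepsilon,T)$, and let $T\to\infty$. Exactly as in Lemma 2, the diagonal term contributes $T/2$ times the relevant coefficient difference while all off-diagonal cross-terms are only $O(1)$ in $T$; dividing by $T$ produces the four scalar equations forcing
\[
\beta_{2}^{+}=\tilde\beta_{2}^{+},\qquad \beta_{2}^{-}=\tilde\beta_{2}^{-},\qquad \gamma_{2}=\tilde\gamma_{2}.
\]
Combining these with $\beta_{2}^{\mp}=\tfrac12\bigl(\alpha_{2}\mp\alpha/(\alpha_{2}\beta)\bigr)$ and the constraint $\alpha_{2}>0$ then gives $\alpha_{2}=\tilde\alpha_{2}$ algebraically.

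For the parameters at $a_{1}$ I would exploit the fact that each of the four coefficients appearing in \eqref{14)} carries, after a finer expansion of $\varphi_{0}(\pi,\lambda)$ through the first jump, a further multiplicative factor of the form $\beta_{1}^{\pm}\pm\gamma_{1}/(2\alpha)$ coming from the matching at $a_{1}$. Repeating the multiply-and-integrate procedure against the same four cosine families, and using the independence of the resulting eight frequencies, produces the analogous equalities $\beta_{1}^{\pm}=\tilde\beta_{1}^{\pm}$ and $\gamma_{1}=\tilde\gamma_{1}$, whence $\alpha_{1}=\tilde\alpha_{1}$.

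The principal obstacle is the last step: since the characteristic function is evaluated only at the single point $x=\pi$, the dependence on $\alpha_{1}$ and $\gamma_{1}$ is hidden inside products that make up the coefficients of the four cosines in \eqref{14)}. The delicate point is to verify that the eight cosine contributions so generated have frequencies pairwise distinct enough (this uses the explicit formulas for $k^{\pm},s^{\pm}$ together with the hypothesis $\alpha+\beta>1$ and $a_{1}\in[0,\pi/2]$, $a_{2}\in[\pi/2,\pi]$) that asymptotic orthogonality in $\lambda$ still separates them. This technical bookkeeping is the heart of the argument and is precisely what is carried out in detail in \cite{Ergün-1}.
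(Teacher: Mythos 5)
First, a point of comparison: the paper does not actually prove this lemma at all --- its ``proof'' is the single sentence deferring everything to the cited reference --- so there is no argument in the paper for your proposal to agree or disagree with. What you have written is the natural reconstruction, modelled on the paper's proof of the preceding lemma (multiply the identity relating $\Delta-C\tilde{\Delta}$ to a sum of cosines by each elementary cosine, integrate over $(\varepsilon,T)$, divide by $T$, let $T\to\infty$), and as a strategy it is surely the intended one. But as a proof it has three genuine gaps. (i) You import hypotheses the lemma does not contain: the lemma assumes only $\lambda_{n}=\tilde{\lambda}_{n}$, yet you use $p=\tilde{p}$, $q=\tilde{q}$ a.e.\ on $(\pi/2,\pi)$ (a hypothesis of Theorem 1, not of this lemma) to conclude $w(\pi)=\tilde{w}(\pi)$, and you use $\alpha=\tilde{\alpha}$, $\beta=\tilde{\beta}$ from the previous lemma, which itself carries the extra assumption $\alpha/\tilde{\alpha}=\beta/\tilde{\beta}$. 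These imports may well be what the author intends, but without them the phases $w(\pi)/\beta$ and $\tilde{w}(\pi)/\tilde{\beta}$ do not cancel and your averaging argument does not isolate the coefficient differences.

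(ii) Separating the four coefficients $\bigl(\beta_{2}^{\pm}\pm\tfrac{\gamma_{2}}{2\beta}\bigr)$ requires the four frequencies $k^{\pm}(\pi)$, $s^{\pm}(\pi)$ to be pairwise distinct, and this can genuinely fail: for instance $k^{-}(\pi)=s^{+}(\pi)$ exactly when $\alpha\left(a_{2}-a_{1}\right)=\beta\left(\pi-a_{2}\right)$, in which case the time average recovers only the sum of two coefficients rather than each one separately. You flag this as delicate but do not resolve it, and the constraints $\alpha+\beta>1$, $a_{1}\in[0,\pi/2]$, $a_{2}\in[\pi/2,\pi]$ do not by themselves exclude such resonances. (iii) Most importantly, the case $i=1$ is not actually carried out. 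You correctly observe that $\alpha_{1},\gamma_{1}$ enter $\Delta(\lambda)=\varphi(\pi,\lambda)$ only through products of the form $\bigl(\beta_{1}^{\pm}\pm\tfrac{\gamma_{1}}{2\alpha}\bigr)\bigl(\beta_{2}^{\pm}\pm\tfrac{\gamma_{2}}{2\beta}\bigr)$ attached to eight cosines (the paper's displayed formula (1.14) omits these $a_{1}$-factors entirely, so it cannot be taken at face value), but you then assert that the analogous equalities follow and explicitly defer ``the heart of the argument'' to the reference. Extracting $\alpha_{1}=\tilde{\alpha}_{1}$ and $\gamma_{1}=\tilde{\gamma}_{1}$ from equalities of such products, after the $i=2$ factors have been matched, is a nontrivial algebraic and frequency-separation step that must be exhibited, not announced. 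In short: the plan is the right one and mirrors what the paper's citation presumably contains, but as written it is a sketch with the decisive step missing.
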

\noindent

\begin{theorem}\label{1} Let $\left\{\lambda _{n} \right\}$ a eigenvalues of both problem $\left(1.1\right)-\left(1.6\right)$ and $\left(1.7\right)-\left(1.12\right)$. If $p\left(x\right)=\tilde{p}\left(x\right)$ and $q\left(x\right)=\tilde{q}\left(x\right)$ on $\left[\frac{\pi }{2} ,\pi \right]$ , then $p\left(x\right)=\tilde{p}\left(x\right)$ and $q\left(x\right)=\tilde{q}\left(x\right)$ almost everywhere on $\left[0,\pi \right]$.
\end{theorem}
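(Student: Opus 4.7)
I will follow the Hochstadt--Lieberman strategy, adapted to the diffusion operator with two interior jumps. The scheme splits into three phases: match all non-spectral data; convert the hypothesis into an entire-function identity $H(\lambda)\equiv 0$ via Green's identity; and extract $p\equiv\tilde p$, $q\equiv\tilde q$ on $[0,\pi/2]$ by a separation-and-completeness argument. For the first phase, since $\Delta,\tilde\Delta$ are entire of order one with the same zero-set $\{\lambda_n\}$, Hadamard factorization gives $\Delta(\lambda)\equiv C\tilde\Delta(\lambda)$. Combining this with the two lemmas of Section~2 and with the leading eigenvalue asymptotics (which pin down the ratio $\alpha/\tilde\alpha=\beta/\tilde\beta$) yields $\alpha=\tilde\alpha$, $\beta=\tilde\beta$ (so $\delta\equiv\tilde\delta$), $\alpha_i=\tilde\alpha_i$, $\gamma_i=\tilde\gamma_i$, $\beta_i=\tilde\beta_i=1/\alpha_i$ for $i=1,2$, and $C=1$. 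Hence every coefficient in (1.3)--(1.6) agrees with its tilded counterpart in (1.9)--(1.12).

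\textbf{Main step.} Next I multiply (1.1) by $\tilde\varphi(x,\lambda)$ and (1.7) by $\varphi(x,\lambda)$, subtract, and integrate over $[0,a_1]\cup(a_1,a_2)\cup(a_2,\pi]$. Because $\delta\equiv\tilde\delta$, the $\lambda^{2}$-weighted contributions cancel identically, and a direct calculation using $\alpha_i\beta_i=1$ shows that the jumps of the Wronskian $W(\varphi,\tilde\varphi):=\varphi\tilde\varphi'-\varphi'\tilde\varphi$ at $a_1$ and $a_2$ both vanish. Since $p=\tilde p$ and $q=\tilde q$ on $[\pi/2,\pi]$, the integrand is supported in $[0,\pi/2]$, yielding
\[H(\lambda):=\int_{0}^{\pi/2}\bigl\{2\lambda[p(x)-\tilde p(x)]+[q(x)-\tilde q(x)]\bigr\}\varphi(x,\lambda)\tilde\varphi(x,\lambda)\,dx=W(\varphi,\tilde\varphi)(0)-W(\varphi,\tilde\varphi)(\pi).\]
The condition $y'(0)=0$ gives $W(\varphi,\tilde\varphi)(0)=0$, and at $\lambda=\lambda_n$ the equalities $\varphi(\pi,\lambda_n)=\tilde\varphi(\pi,\lambda_n)=0$ give $W(\varphi,\tilde\varphi)(\pi,\lambda_n)=0$; hence $H(\lambda_n)=0$ for every $n$, so $F(\lambda):=H(\lambda)/\Delta(\lambda)$ is entire. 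Using the representation of $\varphi$ in Lemma~1 and the asymptotics of $\varphi$ and $\Delta$ recorded just before Section~2, I then compare the indicator diagrams of $H$ and $\Delta$ sector by sector; a Phragm\'en--Lindel\"of argument followed by Liouville's theorem forces $F$ to be constant, and the decay of $F$ along the real axis forces that constant to be $0$. Thus $H\equiv 0$.

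\textbf{Extraction.} Substituting the representation from Lemma~1 for $\varphi,\tilde\varphi$ into the product $\varphi\tilde\varphi$, the identity $H\equiv 0$ separates into its $\lambda$-linear and $\lambda$-free parts, namely
\[\int_{0}^{\pi/2}[p(x)-\tilde p(x)]\varphi(x,\lambda)\tilde\varphi(x,\lambda)\,dx\equiv 0,\qquad\int_{0}^{\pi/2}[q(x)-\tilde q(x)]\varphi(x,\lambda)\tilde\varphi(x,\lambda)\,dx\equiv 0.\]
Completeness of $\{\varphi(\cdot,\lambda)\tilde\varphi(\cdot,\lambda)\}_{\lambda\in\mathbb{C}}$ in $L^{2}(0,\pi/2)$---which follows from the invertibility of the Volterra-type transformation operator built on the kernels $M,N$ of Lemma~1---then forces $p=\tilde p$ and $q=\tilde q$ almost everywhere on $[0,\pi/2]$; combined with the hypothesis on $[\pi/2,\pi]$ this completes the proof.

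The hardest step is the indicator estimate used to pass from ``$F$ entire'' to ``$F$ bounded'': on $[0,\pi/2]$ the product $\varphi\tilde\varphi$ contains several oscillatory cross-terms (arising from the four phases $\xi^{\pm}$, $k^{\pm}$, $s^{\pm}$ displayed in Lemma~1), and the maximum of their exponential indicators must be shown not to exceed the indicator of $\Delta$ in every direction. Because $0<\alpha<\beta<1$ and $a_1\le\pi/2\le a_2$, this comparison is delicate in sectors where the dominant phase of $\Delta$ is not the dominant phase of $\varphi\tilde\varphi$, and sharp asymptotics must replace crude exponential bounds. The separation step in the extraction phase is a secondary technical difficulty, since the two integrals comprising $H$ have different homogeneities in $\lambda$ and the cancellation must be followed term by term using the Lemma~1 kernels.
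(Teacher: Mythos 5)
Your first two phases match the paper's argument: Hadamard factorization plus the two lemmas to match $\alpha,\beta,\alpha_i,\gamma_i$ and get $C=1$; then Green's identity, vanishing of the Wronskian terms, $U(\lambda_n)=0$, entirety of $U/\Delta$, and Liouville plus Riemann--Lebesgue to conclude $U\equiv 0$. The genuine gap is in your extraction phase. You assert that $H(\lambda)=2\lambda U_1(\lambda)+U_2(\lambda)\equiv 0$ ``separates into its $\lambda$-linear and $\lambda$-free parts,'' i.e.\ that $U_1(\lambda)\equiv 0$ and $U_2(\lambda)\equiv 0$ individually. But $U_1$ and $U_2$ are not constants in $\lambda$: they are oscillatory integrals built from $\varphi(x,\lambda)\tilde\varphi(x,\lambda)$, so there is no decomposition of $H$ into homogeneous parts in $\lambda$, and the identity $2\lambda U_1+U_2\equiv 0$ is perfectly compatible with neither integral vanishing (e.g.\ $U_2=-2\lambda U_1$). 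This is precisely the point where the two-potential (diffusion pencil) problem is harder than the classical Hochstadt--Lieberman setting with a single potential, and your reduction to two independent one-potential statements erases that difficulty.

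What the paper does instead is keep the single identity $2\lambda U_1(\lambda)+U_2(\lambda)=0$, integrate by parts in $U_1$ so that $2\lambda U_1$ becomes a sum of integrals of the derivatives $T_j'(t)$ (with $T_j=Pe^{\pm iK}$, $Pe^{\pm iL}$, \dots) against the \emph{same} exponential families $e^{\pm 2i\lambda\xi^{\pm}(t)}$, $e^{\pm 2i\lambda\alpha(t-a_1)}$, $e^{\pm 2i\lambda t}$ that occur in $U_2$, and only then invoke completeness of those exponential systems in $L_2(-\pi/2,\pi/2)$. This yields the coupled relations $R_j(t)\pm iT_j'(t)=0$, which mix $Q$ with $P$ and $P'$, and which are assembled into a homogeneous Volterra system $S(t)+\int_t^{\pi/2}K(x,t)S(x)\,dx=0$ for $S=(Q,P,P')^{T}$; triviality of its solution gives $P=Q=0$. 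Your appeal to ``completeness of $\{\varphi(\cdot,\lambda)\tilde\varphi(\cdot,\lambda)\}$'' is also not what is used (and would in any case only help after the unjustified separation). To repair your proof you would need to replace the separation step by this integration-by-parts-and-coupling argument, or supply an independent reason why $U_1$ and $U_2$ vanish separately; as written, the final step does not go through. Your discussion of the indicator comparison for $F=H/\Delta$ is a reasonable account of a real technical point, but it is not where the proof actually fails.
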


\begin{proof}[Proof of Theorem {1}] Let function $\varphi \left(x,\lambda \right)$ the solution of equation $\left(1.1\right)$ under the conditions $\left(1.2\right)-\left(1.6\right)$ and the function $\tilde{\varphi }\left(x,\lambda \right)$ the solution of equation $\left(1.7\right)$ under the conditions $\left(1.8\right)-\left(1.12\right)$in $\left[0,\frac{\pi }{2} \right]$. The integral forms of the  functions $\varphi \left(x,\lambda \right)$ and $\tilde{\varphi }\left(x,\lambda \right)$ can be obtained as follows
	
	\noindent 
	\begin{equation} \label{16)} 
	\begin{array}{l} {\varphi \left(x,\lambda \right)=\left(\beta _{1} ^{+} +\frac{\gamma _{1} }{2\alpha } \right)\cos \left[\lambda \xi ^{+} \left(x\right)-\frac{1}{\alpha } \int _{a_{1} }^{x}p\left(t\right)dt \right]} \\ {+\left(\beta _{1} ^{-} -\frac{\gamma _{1} }{2\alpha } \right)\cos \left[\lambda \xi ^{-} \left(x\right)+\frac{1}{\alpha } \int _{a_{1} }^{x}p\left(t\right)dt \right]+\int _{0}^{x}M\left(x,t\right)\cos \lambda tdt +\int _{0}^{x}N\left(x,t\right)\sin \lambda tdt } \end{array} 
	\end{equation} 
	and
	\begin{equation} \label{17)} 
	\begin{array}{l} {\tilde{\varphi }\left(x,\lambda \right)=\left(\tilde{\beta }_{1} ^{+} +\frac{\tilde{\gamma }_{1} }{2\alpha } \right)\cos \left[\lambda \xi ^{+} \left(x\right)-\frac{1}{\alpha } \int _{a_{1} }^{x}\tilde{p}\left(t\right)dt \right]} \\ {+\left(\tilde{\beta }_{1} ^{-} -\frac{\tilde{\gamma }_{1} }{2\alpha } \right)\cos \left[\lambda \xi ^{-} \left(x\right)+\frac{1}{\alpha } \int _{a_{1} }^{x}\tilde{p}\left(t\right)dt \right]+\int _{0}^{x}\tilde{M}\left(x,t\right)\cos \lambda tdt +\int _{0}^{x}\tilde{N}\left(x,t\right)\sin \lambda tdt } \end{array} 
	\end{equation} 
	If we multiply equations $\left(2.2\right)$ and $\left(2.3\right)$\\
$\begin{array}{l} {\varphi \left(x,\lambda \right)\cdot \tilde{\varphi }\left(x,\lambda \right)=\frac{S^{+} \tilde{S}^{+} }{2} \left[\cos \left(2\lambda \xi ^{+} \left(x\right)-K\left(x\right)\right)+\cos L\left(x\right)\right]} \\ {+\frac{S^{+} \tilde{S}^{-} }{2} \left[\cos \left(2\lambda a_{1} t-L\left(x\right)\right)+\cos \left(2\lambda \alpha \left(x-a_{1} \right)-K\left(x\right)\right)\right]} \\ {+\frac{S^{-} \tilde{S}^{+} }{2} \left[\cos \left(2\lambda a_{1} +L\left(x\right)\right)+\cos \left(2\lambda \alpha \left(x-a_{1} \right)+K\left(x\right)\right)\right]} \\ {+\frac{S^{-} \tilde{S}^{-} }{2} \left[\cos \left(2\lambda \xi ^{-} \left(x\right)+L\left(x\right)\right)+\cos K\left(x\right)\right]} \\ {+S^{+} \int _{0}^{x}\tilde{M}\left(x,t\right)\cos \left[\lambda \xi ^{+} \left(x\right)-\frac{t\left(x\right)}{\alpha } \right] \cos \lambda tdt} \\ {+S^{+} \int _{0}^{x}\tilde{N}\left(x,t\right)\cos \left[\lambda \xi ^{+} \left(x\right)-\frac{t\left(x\right)}{\alpha } \right] \sin \lambda tdt} \\ {+S^{-} \int _{0}^{x}\tilde{M}\left(x,t\right)\cos \left[\lambda \xi ^{-} \left(x\right)+\frac{t\left(x\right)}{\alpha } \right] \cos \lambda tdt} \\ {+S^{-} \int _{0}^{x}\tilde{N}\left(x,t\right)\cos \left[\lambda \xi ^{-} \left(x\right)+\frac{t\left(x\right)}{\alpha } \right] \sin \lambda tdt} \\ {+\tilde{S}^{+} \int _{0}^{x}M\left(x,t\right)\cos \left[\lambda \xi ^{+} \left(x\right)-\frac{\tilde{t}\left(x\right)}{\alpha } \right] \cos \lambda tdt} \\ {+\tilde{S}^{+} \int _{0}^{x}N\left(x,t\right)\cos \left[\lambda \xi ^{+} \left(x\right)-\frac{\tilde{t}\left(x\right)}{\alpha } \right] \sin \lambda tdt} \\ {+\tilde{S}^{-} \int _{0}^{x}M\left(x,t\right)\cos \left[\lambda \xi ^{-} \left(x\right)+\frac{\tilde{t}\left(x\right)}{\alpha } \right] \cos \lambda tdt} \\ {+\tilde{S}^{-} \int _{0}^{x}N\left(x,t\right)\cos \left[\lambda \xi ^{-} \left(x\right)+\frac{\tilde{t}\left(x\right)}{\alpha } \right] \sin \lambda tdt} \\ {+\left(\int _{0}^{x}M\left(x,t\right)\cos \lambda tdt \right)\left(\int _{0}^{x}\tilde{M}\left(x,t\right)\cos \lambda tdt \right)} \\ {+\left(\int _{0}^{x}N\left(x,t\right)\sin \lambda tdt \right)\left(\int _{0}^{x}\tilde{N}\left(x,t\right)\sin \lambda tdt \right)} \\ {+\left(\int _{0}^{x}M\left(x,t\right)\cos \lambda tdt \right)\left(\int _{0}^{x}\tilde{N}\left(x,t\right)\sin \lambda tdt \right)} \\ {+\left(\int _{0}^{x}\tilde{M}\left(x,t\right)\cos \lambda tdt \right)\left(\int _{0}^{x}N\left(x,t\right)\sin \lambda tdt \right)} \end{array}$

	\noindent 
	\begin{equation} \label{18)} 
	\begin{array}{l} {\varphi \left(x,\lambda \right)\cdot \tilde{\varphi }\left(x,\lambda \right)=\frac{S^{+} \tilde{S}^{+} }{2} \left[\cos \left(2\lambda \xi ^{+} \left(x\right)-K\left(x\right)\right)+\cos L\left(x\right)\right]} \\ {+\frac{S^{+} \tilde{S}^{-} }{2} \left[\cos \left(2\lambda a_{1} t-L\left(x\right)\right)+\cos \left(2\lambda \alpha \left(x-a_{1} \right)-K\left(x\right)\right)\right]} \\ {+\frac{S^{-} \tilde{S}^{+} }{2} \left[\cos \left(2\lambda a_{1} +L\left(x\right)\right)+\cos \left(2\lambda \alpha \left(x-a_{1} \right)+K\left(x\right)\right)\right]} \\ {+\frac{S^{-} \tilde{S}^{-} }{2} \left[\cos \left(2\lambda \xi ^{-} \left(x\right)+L\left(x\right)\right)+\cos K\left(x\right)\right]} \\ {+\frac{1}{2} \left\{\int _{0}^{x}U_{c} \left(x,t\right)\cos \left(2\lambda t-K\left(t\right)\right)dt -\int _{0}^{x}U_{s} \left(x,t\right)\sin \left(2\lambda t-K\left(t\right)\right)dt \right\}} \end{array} 
	\end{equation} 
	is obtained, being $S^{\pm } =\left(\beta _{1} ^{\pm } \mp \frac{\gamma _{1} }{2\alpha } \right)$, $\tilde{S}^{\pm } =\left(\tilde{\beta }_{1} ^{\pm } \mp \frac{\tilde{\gamma }_{1} }{2\alpha } \right)$, $K\left(x\right)=\frac{t\left(x\right)+\tilde{t}\left(x\right)}{2} $, $L\left(x\right)=\frac{t\left(x\right)-\tilde{t}\left(x\right)}{2} $,\\
$\begin{array}{l} {U_{c} \left(x,t\right)=S^{+} \tilde{M}\left(x,\xi ^{+} \left(x\right)-2t\right)\cos \left(K\left(t\right)-\frac{t\left(x\right)}{\alpha } \right)} \\ {+S^{-} \tilde{M}\left(x,\xi ^{-} \left(x\right)-2t\right)\cos \left(K\left(t\right)-\frac{t\left(x\right)}{\alpha } \right)} \\ {+\tilde{S}^{+} M\left(x,\xi ^{+} \left(x\right)-2t\right)\cos \left(K\left(t\right)-\frac{\tilde{t}\left(x\right)}{\alpha } \right)} \\ {+\tilde{S}^{-} M\left(x,\xi ^{-} \left(x\right)-2t\right)\sin \left(K\left(t\right)-\frac{\tilde{t}\left(x\right)}{\alpha } \right)} \\ {-S^{-} \tilde{N}\left(x,\xi ^{+} \left(x\right)-2t\right)\sin \left(K\left(t\right)-\frac{t\left(x\right)}{\alpha } \right)} \\ {-S^{-} \tilde{N}\left(x,\xi ^{-} \left(x\right)-2t\right)\sin \left(K\left(t\right)-\frac{t\left(x\right)}{\alpha } \right)} \\ {-\tilde{S}^{+} N\left(x,\xi ^{+} \left(x\right)-2t\right)\sin \left(K\left(t\right)-\frac{\tilde{t}\left(x\right)}{\alpha } \right)} \\ {-\tilde{S}^{-} N\left(x,\xi ^{-} \left(x\right)-2t\right)\sin \left(K\left(t\right)-\frac{\tilde{t}\left(x\right)}{\alpha } \right)} \\ {+K_{1} \left(x,t\right)\cos K\left(t\right)+K_{2} \left(x,t\right)\cos K\left(t\right)} \\ {+M_{1} \left(x,t\right)\sin K\left(t\right)+M_{2} \left(x,t\right)\sin K\left(t\right)} \end{array}$ \\
	
$\begin{array}{l} {U_{s} \left(x,t\right)=S^{+} \tilde{M}\left(x,\xi ^{+} \left(x\right)-2t\right)\sin \left(K\left(t\right)-\frac{t\left(x\right)}{\alpha } \right)} \\ {+S^{-} \tilde{M}\left(x,\xi ^{-} \left(x\right)-2t\right)\sin \left(K\left(t\right)-\frac{t\left(x\right)}{\alpha } \right)} \\ {+\tilde{S}^{+} M\left(x,\xi ^{+} \left(x\right)-2t\right)\sin \left(K\left(t\right)-\frac{\tilde{t}\left(x\right)}{\alpha } \right)} \\ {+\tilde{S}^{-} M\left(x,\xi ^{-} \left(x\right)-2t\right)\sin \left(K\left(t\right)-\frac{\tilde{t}\left(x\right)}{\alpha } \right)} \\ {+S^{+} \tilde{N}\left(x,\xi ^{+} \left(x\right)-2t\right)\cos \left(K\left(t\right)-\frac{t\left(x\right)}{\alpha } \right)} \\ {+S^{-} \tilde{N}\left(x,\xi ^{-} \left(x\right)-2t\right)\cos \left(K\left(t\right)-\frac{t\left(x\right)}{\alpha } \right)} \\ {+\tilde{S}^{+} N\left(x,\xi ^{+} \left(x\right)-2t\right)\cos \left(K\left(t\right)-\frac{\tilde{t}\left(x\right)}{\alpha } \right)} \\ {+\tilde{S}^{-} N\left(x,\xi ^{-} \left(x\right)-2t\right)\cos \left(K\left(t\right)-\frac{\tilde{t}\left(x\right)}{\alpha } \right)} \\ {+K_{1} \left(x,t\right)\sin K\left(t\right)+K_{2} \left(x,t\right)\sin K\left(t\right)} \\ {-M_{1} \left(x,t\right)\cos K\left(t\right)-M_{2} \left(x,t\right)\cos K\left(t\right)} \end{array}$
	\[K_{1} \left(x,t\right)=\int _{-x}^{x-2t}M\left(x,s\right)\tilde{M}\left(x,s+2t\right) ds+\int _{2t-x}^{x}M\left(x,s\right)\tilde{M}\left(x,s+2t\right) ds\] 
	\[K_{2} \left(x,t\right)=\int _{-x}^{x-2t}N\left(x,s\right)\tilde{N}\left(x,s+2t\right) ds+\int _{2t-x}^{x}n\left(x,s\right)\tilde{N}\left(x,s+2t\right) ds\] 
	\[M_{1} \left(x,t\right)=\int _{-x}^{x-2t}M\left(x,s\right)\tilde{N}\left(x,s+2t\right) ds-\int _{2t-x}^{x}M\left(x,s\right)\tilde{N}\left(x,s+2t\right) ds\] 
	\[M_{2} \left(x,t\right)=-\int _{-x}^{x-2t}N\left(x,s\right)\tilde{M}\left(x,s+2t\right) ds+\int _{2t-x}^{x}N\left(x,s\right)\tilde{M}\left(x,s+2t\right) ds\] 
	Let $\varphi \left(x,\lambda \right)$ and $\tilde{\varphi }\left(x,\lambda \right)$ are substituted into $\left(1.1\right)$ and $\left(1.7\right)$,  
\begin{equation} \label{19)} 
	-\varphi ''\left(x,\lambda \right)+\left(2\lambda p\left(x\right)+q\left(x\right)\right)\varphi \left(x,\lambda \right)=\lambda ^{2} \rho \left(x\right)\varphi \left(x,\lambda \right) 
\end{equation} 
	\begin{equation} \label{20)} 
	-\tilde{\varphi }''\left(x,\lambda \right)+\left(2\lambda p\left(x\right)+q\left(x\right)\right)\tilde{\varphi }\left(x,\lambda \right)=\lambda ^{2} \rho \left(x\right)\tilde{\varphi }\left(x,\lambda \right) 
	\end{equation} 
	The following equations is obtained  $\left(2.5\right)$ and $\left(2.6\right)$ \\
$\begin{array}{l} {\int _{0}^{\frac{\pi }{2} }\varphi \left(x,\lambda \right)\tilde{\varphi }\left(x,\lambda \right)\left[2\lambda \left(p\left(x\right)-\tilde{p}\left(x\right)\right)+\left(q\left(x\right)-\tilde{q}\left(x\right)\right)\right] dx} \\ {=\left[\tilde{\varphi }'\left(x,\lambda \right)\varphi \left(x,\lambda \right)-\varphi '\left(x,\lambda \right)\tilde{\varphi }\left(x,\lambda \right)\right]_{0}^{\frac{\pi }{2} } +\left. \right|_{\frac{\pi }{2} }^{\pi } } \end{array}$ 
	\begin{equation} \label{21)} 
	\begin{array}{l} {\int _{0}^{\frac{\pi }{2} }\varphi \left(x,\lambda \right)\tilde{\varphi }\left(x,\lambda \right)\left[2\lambda \left(p\left(x\right)-\tilde{p}\left(x\right)\right)+\left(q\left(x\right)-\tilde{q}\left(x\right)\right)\right] dx} \\ {+\tilde{\varphi }'\left(\pi ,\lambda \right)\varphi \left(\pi ,\lambda \right)-\varphi '\left(\pi ,\lambda \right)\tilde{\varphi }\left(\pi ,\lambda \right)=0} \end{array} 
	\end{equation} 
	Let $Q\left(x\right)=q\left(x\right)-\tilde{q}\left(x\right)$ and $P\left(x\right)=p\left(x\right)-\tilde{p}\left(x\right)$ 
	\[U\left(\lambda \right)=\int _{0}^{\frac{\pi }{2} }\left[2\lambda P\left(x\right)+Q\left(x\right)\right] \varphi \left(x,\lambda \right)\tilde{\varphi }\left(x,\lambda \right)dx\] 
	It is obvious that the functions $\varphi \left(x,\lambda \right)$ and $\tilde{\varphi }\left(x,\lambda \right)$are the solutions which satisfy boundary value conditions of $\left(1.2\right)$ and $\left(1.8\right)$, recpectively, then if we consider this facts in equation $\left(2.7\right)$, we obtain the following equation
	\begin{equation} \label{22)} 
	U\left(\lambda _{n} \right)=0 
	\end{equation} 
	for each eigenvalue $\lambda _{n} $. Let us marked
	\[U_{1} \left(\lambda \right)=\int _{0}^{\frac{\pi }{2} }P\left(x\right) \varphi \left(x,\lambda \right)\tilde{\varphi }\left(x,\lambda \right)dx, U_{2} \left(\lambda \right)=\int _{0}^{\frac{\pi }{2} }Q\left(x\right) \varphi \left(x,\lambda \right)\tilde{\varphi }\left(x,\lambda \right)dx\] 
	Then equations $\left(2.7\right)$ can be rewritten as 
	\[2\lambda _{n} U_{1} \left(\lambda _{n} \right)+U_{2} \left(\lambda _{n} \right)=0.\] 
	From $\left(2.4\right)$ ve $\left(2.7\right)$ we obtain   
	\begin{equation} \label{23)} 
	\left|U\left(\lambda \right)\right|\le \left(C_{1} +C_{2} \left|\lambda \right|\right)\exp \left(\tau \pi \right) 
	\end{equation} 
	$C_{1} ,C_{2} >0$ are constants.
	
	\noindent For all complex $\lambda $. Because $\lambda _{n} =\tilde{\lambda }_{n} $,  $\Delta \left(\lambda \right)=\varphi \left(\pi ,\lambda \right)=\tilde{\varphi }\left(\pi ,\lambda \right)$. Thus,
	\[U\left(\lambda \right)=\int _{0}^{\frac{\pi }{2} }\left[2\lambda P\left(x\right)+Q\left(x\right)\right] \varphi \left(x,\lambda \right)\tilde{\varphi }\left(x,\lambda \right)dx=\Delta \left(\lambda \right)\left[\varphi \left(\pi ,\lambda \right)-\tilde{\varphi }\left(\pi ,\lambda \right)\right] .\] 
	The function $\phi \left(\lambda \right)=\frac{U\left(\lambda \right)}{\Delta \left(\lambda \right)} $ is an entire function with respect to $\lambda $.
	
	\noindent It follows from $\Delta \left(\lambda \right)\ge \left(\left|\lambda \beta \right|-C\right)\exp \left(\tau \xi ^{+} \left(x\right)\right)$  and $\left(2.9\right)$, $\phi \left(\lambda \right)=O\left(1\right)$ for sufficient large $\left|\lambda \right|$. We obtain  $\phi \left(\lambda \right)=C$, for all $\lambda $ by Liouville's Theorem.\\
$U\left(\lambda \right)=C\Delta \left(\lambda \right)$ \\
$\begin{array}{l} {\int _{0}^{\frac{\pi }{2} }\varphi \left(x,\lambda \right)\tilde{\varphi }\left(x,\lambda \right)\left[2\lambda P\left(x\right)+Q\left(x\right)\right] dx=} \\ {=C\left[\left(\beta _{2} ^{+} +\frac{\gamma _{2} }{2\beta } \right)R_{1} \left(a_{2} \right)\cos \left[\lambda k^{+} \left(\pi \right)-\frac{1}{\beta } \int _{a_{2} }^{\pi }p\left(t\right)dt \right]\right. } \\ {+\left(\beta _{2} ^{-} +\frac{\gamma _{2} }{2\beta } \right)R_{2} \left(a_{2} \right)\cos \left[\lambda k^{-} \left(\pi \right)-\frac{1}{\beta } \int _{a_{2} }^{\pi }p\left(t\right)dt \right]} \\ {+\left(\beta _{2} ^{-} -\frac{\gamma _{2} }{2\beta } \right)R_{1} \left(a_{2} \right)\cos \left[\lambda s^{+} \left(\pi \right)+\frac{1}{\beta } \int _{a_{2} }^{\pi }p\left(t\right)dt \right]} \\ {\left. +\left(\beta _{2} ^{+} -\frac{\gamma _{2} }{2\beta } \right)R_{2} \left(a_{2} \right)\cos \left[\lambda s^{-} \left(\pi \right)+\frac{1}{\beta } \int _{a_{2} }^{\pi }p\left(t\right)dt \right]\right]+O\left(\exp \left(\tau k^{+} \left(\pi \right)\right)\right)} \end{array}$\\
	By the Riemann-Lebesgue lemma, for $\lambda \to \infty $ , $\lambda \in {\rm R}$ we get $C=0$. Then,\\
$\begin{array}{l} {2U_{1} \left(\lambda \right)=S^{+} \tilde{S}^{+} \int _{0}^{\frac{\pi }{2} }P\left(x\right)\cos \left(2\lambda \xi ^{+} \left(x\right)-K\left(x\right)\right)dx } \\ {+S^{+} \tilde{S}^{+} \int _{0}^{\frac{\pi }{2} }P\left(x\right)\cos L\left(x\right) dx} \\ {+S^{+} \tilde{S}^{-} \int _{0}^{\frac{\pi }{2} }P\left(x\right)\cos \left(2\lambda a_{1} t-L\left(x\right)\right)dx } \\ {+S^{+} \tilde{S}^{-} \int _{0}^{\frac{\pi }{2} }P\left(x\right)\cos \left(2\lambda \alpha \left(x-a_{1} \right)-K\left(x\right)\right)dx } \\ {+S^{-} \tilde{S}^{+} \int _{0}^{\frac{\pi }{2} }P\left(x\right)\cos \left(2\lambda a_{1} +L\left(x\right)\right)dx } \\ {+S^{-} \tilde{S}^{+} \int _{0}^{\frac{\pi }{2} }P\left(x\right)\cos \cos \left(2\lambda \alpha \left(x-a_{1} \right)+K\left(x\right)\right)dx } \\ {+S^{-} \tilde{S}^{-} \int _{0}^{\frac{\pi }{2} }P\left(x\right)\cos \left(2\lambda \xi ^{-} \left(x\right)+L\left(x\right)\right)dx } \\ {+S^{-} \tilde{S}^{-} \int _{0}^{\frac{\pi }{2} }P\left(x\right)\cos K\left(x\right)dx } \\ {+\int _{0}^{\frac{\pi }{2} }P\left(x\right)\left(\int _{0}^{x}U_{c} \left(x,t\right)\cos \left(2\lambda t-K\left(t\right)\right)dt \right) dx} \\ {-\int _{0}^{\frac{\pi }{2} }P\left(x\right)\left(\int _{0}^{x}U_{s} \left(x,t\right)\sin \left(2\lambda t-K\left(t\right)\right)dt \right) dx.} \end{array}$\\
	where $\xi ^{\pm } \left(x\right)=\pm \alpha x\mp \alpha a_{1} +a_{1} $ , $k^{\pm } \left(x\right)=\mu ^{+} \left(a_{2} \right)\pm \beta x\mp \beta a_{2} $, \\
$s^{\pm } \left(x\right)=\mu ^{-} \left(a_{2} \right)\pm \beta x\mp \beta a_{2} ,\beta _{1} ^{\mp } =\frac{1}{2} \left(\alpha _{1} \mp \frac{\beta _{1} }{\alpha } \right) , \beta _{2} ^{\mp } =\frac{1}{2} \left(\alpha _{2} \mp \frac{\alpha \beta _{2} }{\beta } \right) .$ \\
$\begin{array}{l} {2U_{1} \left(\lambda \right)=\frac{S^{+} \tilde{S}^{+} }{2} \int _{0}^{\frac{\pi }{2} }P\left(t\right)e^{-i\left(K\left(t\right)\right)} e^{i\left(2\lambda \xi ^{+} \left(t\right)\right)} dt +\frac{S^{+} \tilde{S}^{+} }{2} \int _{0}^{\frac{\pi }{2} }P\left(t\right)e^{i\left(K\left(t\right)\right)} e^{-i\left(2\lambda \xi ^{+} \left(t\right)\right)} dt } \\ {+\frac{S^{+} \tilde{S}^{-} }{2} \int _{0}^{\frac{\pi }{2} }P\left(t\right)e^{-i\left(L\left(t\right)\right)} e^{i\left(2\lambda a_{1} t\right)} dt +\frac{S^{+} \tilde{S}^{-} }{2} \int _{0}^{\frac{\pi }{2} }P\left(t\right)e^{i\left(L\left(t\right)\right)} e^{-i\left(2\lambda a_{1} t\right)} dt } \\ {+\frac{S^{+} \tilde{S}^{-} }{2} \int _{0}^{\frac{\pi }{2} }P\left(t\right)e^{-i\left(K\left(t\right)\right)} e^{i\left(2\lambda \alpha \left(t-a_{1} \right)\right)} dt ++\frac{S^{+} \tilde{S}^{-} }{2} \int _{0}^{\frac{\pi }{2} }P\left(t\right)e^{i\left(K\left(t\right)\right)} e^{-i\left(2\lambda \alpha \left(t-a_{1} \right)\right)} dt } \\ {+\frac{S^{-} \tilde{S}^{+} }{2} \int _{0}^{\frac{\pi }{2} }P\left(t\right)e^{i\left(L\left(t\right)\right)} e^{i\left(2\lambda a_{1} t\right)} dt +\frac{S^{-} \tilde{S}^{+} }{2} \int _{0}^{\frac{\pi }{2} }P\left(t\right)e^{-i\left(L\left(t\right)\right)} e^{i\left(2\lambda a_{1} t\right)} dt } \\ {+\frac{S^{-} \tilde{S}^{+} }{2} \int _{0}^{\frac{\pi }{2} }P\left(t\right)e^{i\left(K\left(t\right)\right)} e^{i\left(2\lambda \alpha \left(t-a_{1} \right)\right)} dt +\frac{S^{-} \tilde{S}^{+} }{2} \int _{0}^{\frac{\pi }{2} }P\left(t\right)e^{-i\left(K\left(t\right)\right)} e^{i\left(2\lambda \alpha \left(t-a_{1} \right)\right)} dt } \\ {+\frac{S^{-} \tilde{S}^{-} }{2} \int _{0}^{\frac{\pi }{2} }P\left(t\right)e^{i\left(L\left(t\right)\right)} e^{-i\left(2\lambda \xi ^{-} \left(t\right)\right)} dt +\frac{S^{-} \tilde{S}^{-} }{2} \int _{0}^{\frac{\pi }{2} }P\left(t\right)e^{-i\left(L\left(t\right)\right)} e^{i\left(2\lambda \xi ^{-} \left(t\right)\right)} dt } \\ {+S^{+} \tilde{S}^{+} \int _{0}^{\frac{\pi }{2} }P\left(x\right)\cos L\left(x\right) dx+S^{-} \tilde{S}^{-} \int _{0}^{\frac{\pi }{2} }P\left(x\right)\cos K\left(x\right)dx } \\ {+\int _{0}^{\frac{\pi }{2} }P\left(x\right)\left(\int _{0}^{x}U_{c} \left(x,t\right)\cos \left(2\lambda t-K\left(t\right)\right)dt \right) dx} \\ {-\int _{0}^{\frac{\pi }{2} }P\left(x\right)\left(\int _{0}^{x}U_{s} \left(x,t\right)\sin \left(2\lambda t-K\left(t\right)\right)dt \right) dx} \end{array}$
	if necessary operations are performed and integrals are calculated\\
	
$\begin{array}{l} {2U_{1} \left(\lambda \right)=\frac{S^{+} \tilde{S}^{+} }{2} \left[\frac{T_{1} \left({\raise0.7ex\hbox{$ \pi  $}\!\mathord{\left/ {\vphantom {\pi  2}} \right. \kern-\nulldelimiterspace}\!\lower0.7ex\hbox{$ 2 $}} \right)}{2i\lambda \alpha } e^{i\left(2\lambda \xi ^{+} \left(\frac{\pi }{2} \right)\right)} -\frac{T_{1} \left(0\right)}{2i\lambda \alpha } e^{2i\lambda \left(\alpha a_{1} +a_{1} \right)} -\frac{1}{2i\lambda \alpha } \int _{0}^{\frac{\pi }{2} }T_{1} ^{{'} } \left(t\right)e^{i\left(2\lambda \xi ^{+} \left(t\right)\right)} dt \right]} \\ {+\frac{S^{+} \tilde{S}^{+} }{2} \left[-\frac{T_{2} \left({\raise0.7ex\hbox{$ \pi  $}\!\mathord{\left/ {\vphantom {\pi  2}} \right. \kern-\nulldelimiterspace}\!\lower0.7ex\hbox{$ 2 $}} \right)}{2i\lambda \alpha } e^{-i\left(2\lambda \xi ^{+} \left(\frac{\pi }{2} \right)\right)} +\frac{T_{2} \left(0\right)}{2i\lambda \alpha } e^{-2i\lambda \left(\alpha a_{1} +a_{1} \right)} +\frac{1}{2i\lambda \alpha } \int _{0}^{\frac{\pi }{2} }T_{2} ^{{'} } \left(t\right)e^{-i\left(2\lambda \xi ^{+} \left(t\right)\right)} dt \right]} \\ {+\frac{S^{+} \tilde{S}^{-} }{2} \left[\frac{T_{3} \left({\raise0.7ex\hbox{$ \pi  $}\!\mathord{\left/ {\vphantom {\pi  2}} \right. \kern-\nulldelimiterspace}\!\lower0.7ex\hbox{$ 2 $}} \right)}{2i\lambda \alpha } e^{i\lambda a_{1} } -\frac{T_{3} \left(0\right)}{2i\lambda \alpha } -\frac{1}{2i\lambda \alpha } \int _{0}^{\frac{\pi }{2} }T_{3} ^{{'} } \left(t\right)e^{2ia_{1} t} dt \right]} \\ {+\frac{S^{+} \tilde{S}^{-} }{2} \left[-\frac{T_{4} \left({\raise0.7ex\hbox{$ \pi  $}\!\mathord{\left/ {\vphantom {\pi  2}} \right. \kern-\nulldelimiterspace}\!\lower0.7ex\hbox{$ 2 $}} \right)}{2i\lambda \alpha } e^{i\lambda a_{1} } +\frac{T_{4} \left(0\right)}{2i\lambda \alpha } +\frac{1}{2i\lambda \alpha } \int _{0}^{\frac{\pi }{2} }T_{4} ^{{'} } \left(t\right)e^{-2ia_{1} t} dt \right]} \\ {+\frac{S^{+} S^{-} }{2} \left[\frac{T_{1} \left({\raise0.7ex\hbox{$ \pi  $}\!\mathord{\left/ {\vphantom {\pi  2}} \right. \kern-\nulldelimiterspace}\!\lower0.7ex\hbox{$ 2 $}} \right)}{2i\lambda \alpha } e^{2i\lambda \alpha \left(\frac{\pi }{2} -a_{1} \right)} -\frac{T_{1} \left(0\right)}{2i\lambda \alpha } e^{-2i\lambda \alpha a_{1} } -\frac{1}{2i\lambda \alpha } \int _{0}^{\frac{\pi }{2} }T_{1} ^{{'} } \left(t\right)e^{2i\lambda \alpha \left(t-a_{1} \right)} dt \right]} \\ {+\frac{S^{+} S^{-} }{2} \left[-\frac{T_{2} \left({\raise0.7ex\hbox{$ \pi  $}\!\mathord{\left/ {\vphantom {\pi  2}} \right. \kern-\nulldelimiterspace}\!\lower0.7ex\hbox{$ 2 $}} \right)}{2i\lambda \alpha } e^{-2i\lambda \alpha \left(\frac{\pi }{2} -a_{1} \right)} +\frac{T_{2} \left(0\right)}{2i\lambda \alpha } e^{2i\lambda \alpha a_{1} } +\frac{1}{2i\lambda \alpha } \int _{0}^{\frac{\pi }{2} }T_{2} ^{{'} } \left(t\right)e^{-2i\lambda \alpha \left(t-a_{1} \right)} dt \right]} \\ {+\frac{S^{-} \tilde{S}^{+} }{2} \left[-\frac{T_{3} \left({\raise0.7ex\hbox{$ \pi  $}\!\mathord{\left/ {\vphantom {\pi  2}} \right. \kern-\nulldelimiterspace}\!\lower0.7ex\hbox{$ 2 $}} \right)}{2i\lambda \alpha } e^{-i\lambda a_{1} \pi } +\frac{T_{3} \left(0\right)}{2i\lambda \alpha } +\frac{1}{2i\lambda \alpha } \int _{0}^{\frac{\pi }{2} }T_{3} ^{{'} } \left(t\right)e^{-2ia_{1} t} dt \right]} \\ {+\frac{S^{-} \tilde{S}^{+} }{2} \left[\frac{T_{4} \left({\raise0.7ex\hbox{$ \pi  $}\!\mathord{\left/ {\vphantom {\pi  2}} \right. \kern-\nulldelimiterspace}\!\lower0.7ex\hbox{$ 2 $}} \right)}{2i\lambda \alpha } e^{i\lambda a_{1} \pi } -\frac{T_{4} \left(0\right)}{2i\lambda \alpha } -\frac{1}{2i\lambda \alpha } \int _{0}^{\frac{\pi }{2} }T_{4} ^{{'} } \left(t\right)e^{2ia_{1} t} dt \right]} \\ {+\frac{S^{-} \tilde{S}^{+} }{2} \left[-\frac{T_{1} \left({\raise0.7ex\hbox{$ \pi  $}\!\mathord{\left/ {\vphantom {\pi  2}} \right. \kern-\nulldelimiterspace}\!\lower0.7ex\hbox{$ 2 $}} \right)}{2i\lambda \alpha } e^{-2i\lambda \alpha \left(\frac{\pi }{2} -a_{1} \right)} +\frac{T_{1} \left(0\right)}{2i\lambda \alpha } e^{2i\lambda \alpha a_{1} } +\frac{1}{2i\lambda \alpha } \int _{0}^{\frac{\pi }{2} }T_{1} ^{{'} } \left(t\right)e^{-2i\lambda \alpha \left(t-a_{1} \right)} dt \right]} \\ {+\frac{S^{-} \tilde{S}^{+} }{2} \left[\frac{T_{2} \left({\raise0.7ex\hbox{$ \pi  $}\!\mathord{\left/ {\vphantom {\pi  2}} \right. \kern-\nulldelimiterspace}\!\lower0.7ex\hbox{$ 2 $}} \right)}{2i\lambda \alpha } e^{2i\lambda \alpha \left(\frac{\pi }{2} -a_{1} \right)} -\frac{T_{2} \left(0\right)}{2i\lambda \alpha } e^{-2i\lambda \alpha a_{1} } -\frac{1}{2i\lambda \alpha } \int _{0}^{\frac{\pi }{2} }T_{2} ^{{'} } \left(t\right)e^{2i\lambda \alpha \left(t-a_{1} \right)} dt \right]} \\ {+\frac{S^{-} \tilde{S}^{-} }{2} \left[-\frac{T_{4} \left({\raise0.7ex\hbox{$ \pi  $}\!\mathord{\left/ {\vphantom {\pi  2}} \right. \kern-\nulldelimiterspace}\!\lower0.7ex\hbox{$ 2 $}} \right)}{2i\lambda \alpha } e^{i\left(2\lambda \xi ^{-} \left(\frac{\pi }{2} \right)\right)} +\frac{T_{4} \left(0\right)}{2i\lambda \alpha } e^{2i\lambda \left(\alpha a_{1} +a_{1} \right)} +\frac{1}{2i\lambda \alpha } \int _{0}^{\frac{\pi }{2} }T_{4} ^{{'} } \left(t\right)e^{i\left(2\lambda \xi ^{-} \left(t\right)\right)} dt \right]} \\ {+\frac{S^{-} \tilde{S}^{-} }{2} \left[\frac{T_{3} \left({\raise0.7ex\hbox{$ \pi  $}\!\mathord{\left/ {\vphantom {\pi  2}} \right. \kern-\nulldelimiterspace}\!\lower0.7ex\hbox{$ 2 $}} \right)}{2i\lambda \alpha } e^{-i\left(2\lambda \xi ^{-} \left(\frac{\pi }{2} \right)\right)} -\frac{T_{3} \left(0\right)}{2i\lambda \alpha } e^{2i\lambda \left(\alpha a_{1} -a_{1} \right)} -\frac{1}{2i\lambda \alpha } \int _{0}^{\frac{\pi }{2} }T_{3} ^{{'} } \left(t\right)e^{-i\left(2\lambda \xi ^{+} \left(t\right)\right)} dt \right]} \\ {+S^{+} \tilde{S}^{+} \int _{0}^{\frac{\pi }{2} }P\left(x\right)\cos L\left(x\right) dx+S^{-} \tilde{S}^{-} \int _{0}^{\frac{\pi }{2} }P\left(x\right)\cos K\left(x\right)dx } \\ {+\left[\frac{T_{5} \left({\raise0.7ex\hbox{$ \pi  $}\!\mathord{\left/ {\vphantom {\pi  2}} \right. \kern-\nulldelimiterspace}\!\lower0.7ex\hbox{$ 2 $}} \right)}{2i\lambda } e^{i\pi \lambda } -\frac{T_{5} \left(0\right)}{2i\lambda } -\frac{1}{2i\lambda } \int _{0}^{\frac{\pi }{2} }T'_{1} \left(t\right)e^{2i\lambda t} dt \right]} \\ {+\left[-\frac{T_{6} \left({\raise0.7ex\hbox{$ \pi  $}\!\mathord{\left/ {\vphantom {\pi  2}} \right. \kern-\nulldelimiterspace}\!\lower0.7ex\hbox{$ 2 $}} \right)}{2i\lambda } e^{-i\pi \lambda } +\frac{T_{6} \left(0\right)}{2i\lambda } +\frac{1}{2i\lambda } \int _{0}^{\frac{\pi }{2} }T'_{6} \left(t\right)e^{-2i\lambda t} dt \right]} \end{array}$

	\noindent where $T_{1} \left(t\right)=P\left(t\right)e^{-i\left(K\left(t\right)\right)} $, $T_{2} \left(t\right)=P\left(t\right)e^{i\left(K\left(t\right)\right)} $, $T_{3} \left(t\right)=P\left(t\right)e^{-i\left(L\left(t\right)\right)} $, $T_{4} \left(t\right)=P\left(t\right)e^{i\left(L\left(t\right)\right)} $,
$P_{1} \left(t\right)=\int _{t}^{\frac{\pi }{2} }P\left(x\right)U_{c} \left(x,t\right) dx, P_{2} \left(t\right)=\int _{t}^{\frac{\pi }{2} }P\left(x\right)U_{s} \left(x,t\right) dx$\\
$ , T_{5} \left(t\right)=\frac{P_{1} \left(t\right)+iP_{2} \left(t\right)}{2} e^{-iK\left(t\right)} ,T_{6} \left(t\right)=\frac{P_{1} \left(t\right)-iP_{2} \left(t\right)}{2} e^{iK\left(t\right)} $
	
	\noindent By the Riemann-Lebesgue lemma $\int _{0}^{\frac{\pi }{2} }P\left(x\right)\cos L\left(x\right) dx=0\, \, ,\, \,\\ \int _{0}^{\frac{\pi }{2} }P\left(x\right)\cos K\left(x\right)dx =0$ and $P\left(\frac{\pi }{2} \right)=0$ for $\lambda \to \infty $.\\ Thus,
\begin{equation} \label{24)} 
	\begin{array}{l} {2U_{1} \left(\lambda \right)=-\frac{S^{+} \tilde{S}^{+} }{4i\lambda \alpha } \int _{0}^{\frac{\pi }{2} }T_{1} ^{{'} } \left(t\right)e^{i\left(2\lambda \xi ^{+} \left(t\right)\right)} dt +\frac{S^{+} \tilde{S}^{+} }{4i\lambda \alpha } \int _{0}^{\frac{\pi }{2} }T_{2} ^{{'} } \left(t\right)e^{-i\left(2\lambda \xi ^{+} \left(t\right)\right)} dt } \\ {-\frac{S^{+} \tilde{S}^{-} }{4i\lambda \alpha } \int _{0}^{\frac{\pi }{2} }T_{3} ^{{'} } \left(t\right)e^{2ia_{1} t} dt +\frac{S^{+} \tilde{S}^{-} }{4i\lambda \alpha } \int _{0}^{\frac{\pi }{2} }T_{4} ^{{'} } \left(t\right)e^{-2ia_{1} t} dt } \\ {-\frac{S^{+} S^{-} }{4i\lambda \alpha } \int _{0}^{\frac{\pi }{2} }T_{1} ^{{'} } \left(t\right)e^{2i\lambda \alpha \left(t-a_{1} \right)} dt +\frac{S^{+} S^{-} }{4i\lambda \alpha } \int _{0}^{\frac{\pi }{2} }T_{2} ^{{'} } \left(t\right)e^{-2i\lambda \alpha \left(t-a_{1} \right)} dt } \\ {+\frac{S^{-} \tilde{S}^{+} }{4i\lambda \alpha } \int _{0}^{\frac{\pi }{2} }T_{3} ^{{'} } \left(t\right)e^{-2ia_{1} t} dt -\frac{S^{-} \tilde{S}^{+} }{4i\lambda \alpha } \int _{0}^{\frac{\pi }{2} }T_{4} ^{{'} } \left(t\right)e^{2ia_{1} t} dt } \\ {+\frac{S^{-} \tilde{S}^{+} }{4i\lambda \alpha } \int _{0}^{\frac{\pi }{2} }T_{1} ^{{'} } \left(t\right)e^{-2i\lambda \alpha \left(t-a_{1} \right)} dt +\frac{S^{-} \tilde{S}^{+} }{4i\lambda \alpha } \int _{0}^{\frac{\pi }{2} }T_{2} ^{{'} } \left(t\right)e^{2i\lambda \alpha \left(t-a_{1} \right)} dt } \\ {+\frac{S^{-} \tilde{S}^{-} }{4i\lambda \alpha } \int _{0}^{\frac{\pi }{2} }T_{4} ^{{'} } \left(t\right)e^{i\left(2\lambda \xi ^{-} \left(t\right)\right)} dt -\frac{S^{-} \tilde{S}^{-} }{4i\lambda \alpha } \int _{0}^{\frac{\pi }{2} }T_{3} ^{{'} } \left(t\right)e^{-i\left(2\lambda \xi ^{-} \left(t\right)\right)} dt } \\ {+\frac{i}{2\lambda } \int _{0}^{\frac{\pi }{2} }T'_{5} \left(t\right)e^{2i\lambda t} dt -\frac{i}{2\lambda } \int _{0}^{\frac{\pi }{2} }T'_{6} \left(t\right)e^{-2i\lambda t} dt } \end{array} 
\end{equation} 
$\begin{array}{l} {2U_{2} \left(\lambda \right)=S^{+} \tilde{S}^{+} \int _{0}^{\frac{\pi }{2} }Q\left(x\right)\left(\frac{e^{i\left(2\lambda \xi ^{+} \left(x\right)-K\left(x\right)\right)} +e^{-i\left(2\lambda \xi ^{+} \left(x\right)-K\left(x\right)\right)} }{2} \right)dx } \\ {+S^{+} \tilde{S}^{-} \int _{0}^{\frac{\pi }{2} }Q\left(x\right)\left(\frac{e^{i\left(2\lambda a_{1} t-L\left(x\right)\right)} +e^{-i\left(2\lambda a_{1} t-L\left(x\right)\right)} }{2} \right)dx } \\ {+S^{+} \tilde{S}^{-} \int _{0}^{\frac{\pi }{2} }Q\left(x\right)\left(\frac{e^{i\left(2\lambda \alpha \left(x-a_{1} \right)-K\left(x\right)\right)} +e^{-i\left(2\lambda \alpha \left(x-a_{1} \right)-K\left(x\right)\right)} }{2} \right)dx } \\ {+S^{-} \tilde{S}^{+} \int _{0}^{\frac{\pi }{2} }Q\left(x\right)\left(\frac{e^{i\left(2\lambda a_{1} t+L\left(x\right)\right)} +e^{-i\left(2\lambda a_{1} t+L\left(x\right)\right)} }{2} \right)dx } \\ {+S^{-} \tilde{S}^{+} \int _{0}^{\frac{\pi }{2} }Q\left(x\right)\left(\frac{e^{i\left(2\lambda \alpha \left(x-a_{1} \right)+K\left(x\right)\right)} +e^{-i\left(2\lambda \alpha \left(x-a_{1} \right)+K\left(x\right)\right)} }{2} \right)dx } \\ {+S^{-} \tilde{S}^{-} \int _{0}^{\frac{\pi }{2} }Q\left(x\right)\left(\frac{e^{i\left(2\lambda \xi ^{-} \left(x\right)+L\left(x\right)\right)} +e^{-i\left(2\lambda \xi ^{-} \left(x\right)+L\left(x\right)\right)} }{2} \right)dx } \\ {+S^{+} \tilde{S}^{+} \int _{0}^{\frac{\pi }{2} }Q\left(x\right)\cos L\left(x\right) dx+S^{-} \tilde{S}^{-} \int _{0}^{\frac{\pi }{2} }Q\left(x\right)\cos K\left(x\right)dx } \\ {+\int _{0}^{\frac{\pi }{2} }Q\left(x\right)\left(\int _{0}^{x}U_{c} \left(x,t\right)\cos \left(2\lambda t-K\left(t\right)\right)dt \right) dx} \\ {-\int _{0}^{\frac{\pi }{2} }Q\left(x\right)\left(\int _{0}^{x}U_{s} \left(x,t\right)\sin \left(2\lambda t-K\left(t\right)\right)dt \right) dx} \end{array}$

	\noindent where $R_{1} \left(t\right)=Q\left(t\right)e^{-i\left(K\left(t\right)\right)} $, $R_{2} \left(t\right)=Q\left(t\right)e^{i\left(K\left(t\right)\right)} $, $R_{3} \left(t\right)=Q\left(t\right)e^{-i\left(L\left(t\right)\right)} $,
$R_{4} \left(t\right)=Q\left(t\right)e^{i\left(L\left(t\right)\right)} ,Q_{1} \left(t\right)=\int _{t}^{\frac{\pi }{2} }P\left(x\right)U_{c} \left(x,t\right) dx,$\\
$ Q_{2} \left(t\right)=\int _{t}^{\frac{\pi }{2} }P\left(x\right)U_{s} \left(x,t\right) dx, R_{5} \left(t\right)=\frac{Q_{1} \left(t\right)+iQ_{2} \left(t\right)}{2} e^{-iK\left(t\right)} , $
$R_{6} \left(t\right)=\frac{Q_{1} \left(t\right)-iQ_{2} \left(t\right)}{2} e^{iK\left(t\right)} $\\
	By the Riemann-Lebesgue lemma\\ $\int _{0}^{\frac{\pi }{2} }Q\left(x\right)\cos L\left(x\right) dx=0\, \, ,\, \, \int _{0}^{\frac{\pi }{2} }Q\left(x\right)\cos K\left(x\right)dx =0$. Thus,
	\begin{equation} \label{25)} 
	\begin{array}{l} {2U_{2} \left(\lambda \right)=\frac{S^{+} \tilde{S}^{+} }{2} \int _{0}^{\frac{\pi }{2} }R_{1} \left(t\right)e^{i\left(2\lambda \xi ^{+} \left(t\right)\right)} dt +\frac{S^{+} \tilde{S}^{+} }{2} \int _{0}^{\frac{\pi }{2} }R_{2} \left(t\right)e^{-i\left(2\lambda \xi ^{+} \left(t\right)\right)} dt } \\ {+\frac{S^{+} \tilde{S}^{-} }{2} \int _{0}^{\frac{\pi }{2} }R_{3} \left(t\right)e^{2ia_{1} t} dt +\frac{S^{+} \tilde{S}^{-} }{2} \int _{0}^{\frac{\pi }{2} }R_{4} \left(t\right)e^{-2ia_{1} t} dt } \\ {+\frac{S^{+} S^{-} }{2} \int _{0}^{\frac{\pi }{2} }R_{1} \left(t\right)e^{2i\lambda \alpha \left(t-a_{1} \right)} dt +\frac{S^{+} S^{-} }{2} \int _{0}^{\frac{\pi }{2} }R_{2} \left(t\right)e^{-2i\lambda \alpha \left(t-a_{1} \right)} dt } \\ {+\frac{S^{-} \tilde{S}^{+} }{2} \int _{0}^{\frac{\pi }{2} }R_{3} \left(t\right)e^{-2ia_{1} t} dt +\frac{S^{-} \tilde{S}^{+} }{2} \int _{0}^{\frac{\pi }{2} }R_{4} \left(t\right)e^{2ia_{1} t} dt } \\ {+\frac{S^{-} \tilde{S}^{+} }{2} \int _{0}^{\frac{\pi }{2} }R_{1} \left(t\right)e^{-2i\lambda \alpha \left(t-a_{1} \right)} dt +\frac{S^{-} \tilde{S}^{+} }{2} \int _{0}^{\frac{\pi }{2} }R_{2} \left(t\right)e^{2i\lambda \alpha \left(t-a_{1} \right)} dt } \\ {+\frac{S^{-} \tilde{S}^{-} }{2} \int _{0}^{\frac{\pi }{2} }R_{4} \left(t\right)e^{i\left(2\lambda \xi ^{-} \left(t\right)\right)} dt +\frac{S^{-} \tilde{S}^{-} }{2} \int _{0}^{\frac{\pi }{2} }R_{3} \left(t\right)e^{-i\left(2\lambda \xi ^{-} \left(t\right)\right)} dt } \\ {+\frac{i}{2\lambda } \int _{0}^{\frac{\pi }{2} }R_{5} \left(t\right)e^{2i\lambda t} dt +\frac{i}{2\lambda } \int _{0}^{\frac{\pi }{2} }R_{6} \left(t\right)e^{-2i\lambda t} dt } \end{array} 
	\end{equation} 
	
	\begin{equation} \label{26)} 
	2\lambda U_{1} \left(\lambda \right)+U_{2} \left(\lambda \right)=0. 
	\end{equation} 
	If $\left(2.10\right)$ and $\left(2.11\right)$ are substituted into $\left(2.12\right)$, we get\\
$\begin{array}{l} {\frac{S^{+} \tilde{S}^{+} }{2\alpha } \int _{0}^{\frac{\pi }{2} }\left(R_{1} \left(t\right)+iT'_{1} \left(t\right)\right)e^{i\left(2\lambda \xi ^{+} \left(t\right)\right)} dt +\frac{S^{+} \tilde{S}^{+} }{2\alpha } \int _{0}^{\frac{\pi }{2} }\left(R_{2} \left(t\right)-iT'_{2} \left(t\right)\right)e^{-i\left(2\lambda \xi ^{+} \left(t\right)\right)} dt } \\ {+\frac{S^{+} \tilde{S}^{-} }{2\alpha } \int _{0}^{\frac{\pi }{2} }\left(R_{3} \left(t\right)+iT'_{3} \left(t\right)\right)e^{2ia_{1} t} dt +\frac{S^{+} \tilde{S}^{-} }{2\alpha } \int _{0}^{\frac{\pi }{2} }\left(R_{4} \left(t\right)-iT'_{4} \left(t\right)\right)e^{-2ia_{1} t} dt } \\ {+\frac{S^{+} S^{-} }{2\alpha } \int _{0}^{\frac{\pi }{2} }\left(R_{1} \left(t\right)+iT'_{1} \left(t\right)\right)e^{2i\lambda \alpha \left(t-a_{1} \right)} dt +\frac{S^{+} S^{-} }{2\alpha } \int _{0}^{\frac{\pi }{2} }\left(R_{2} \left(t\right)-iT'_{2} \left(t\right)\right)e^{-2i\lambda \alpha \left(t-a_{1} \right)} dt } \\ {+\frac{S^{-} \tilde{S}^{+} }{2\alpha } \int _{0}^{\frac{\pi }{2} }\left(R_{4} \left(t\right)+iT'_{4} \left(t\right)\right)e^{-2ia_{1} t} dt +\frac{S^{-} \tilde{S}^{+} }{2\alpha } \int _{0}^{\frac{\pi }{2} }\left(R_{3} \left(t\right)-iT'_{3} \left(t\right)\right)e^{2ia_{1} t} dt } \\ {+\frac{S^{-} \tilde{S}^{+} }{2} \int _{0}^{\frac{\pi }{2} }\left(R_{2} \left(t\right)+iT'_{2} \left(t\right)\right)e^{2i\lambda \alpha \left(t-a_{1} \right)} dt +\frac{S^{-} \tilde{S}^{+} }{2} \int _{0}^{\frac{\pi }{2} }\left(R_{1} \left(t\right)-iT'_{1} \left(t\right)\right)e^{-2i\lambda \alpha \left(t-a_{1} \right)} dt } \\ {+\frac{S^{-} \tilde{S}^{-} }{2} \int _{0}^{\frac{\pi }{2} }\left(R_{4} \left(t\right)-iT'_{4} \left(t\right)\right)e^{i\left(2\lambda \xi ^{-} \left(t\right)\right)} dt +\frac{S^{-} \tilde{S}^{-} }{2} \int _{0}^{\frac{\pi }{2} }\left(R_{3} \left(t\right)+iT'_{3} \left(t\right)\right)e^{-i\left(2\lambda \xi ^{-} \left(t\right)\right)} dt } \\ {+\int _{0}^{\frac{\pi }{2} }\left(R_{5} \left(t\right)+iT'_{5} \left(t\right)\right)e^{2i\lambda t} dt +\int _{0}^{\frac{\pi }{2} }\left(R_{6} \left(t\right)-iT'_{6} \left(t\right)\right)e^{-2i\lambda t} dt =0} \end{array}$\\ 
	Since the systems $\left\{e^{\pm 2i\lambda \xi ^{+} \left(t\right)} :\, \, \lambda \in {\rm R}\right\}$ , $\left\{e^{\pm 2i\lambda a_{1} t} :\, \, \lambda \in {\rm R}\right\}$, $\left\{e^{\pm 2i\lambda \alpha \left(t-a_{1} \right)} :\, \, \lambda \in {\rm R}\right\}$ and $\left\{e^{\pm 2i\lambda t} :\, \, \lambda \in {\rm R}\right\}$ are entire in $L_{2} \left(-\frac{\pi }{2} ,\frac{\pi }{2} \right)$, it follows
	\[\begin{array}{l} {R_{1} \left(t\right)+iT'_{1} \left(t\right)=0\, \, ,\, \, R_{2} \left(t\right)-iT'_{2} \left(t\right)=0\, \, ,\, \, R_{3} \left(t\right)+iT'_{3} \left(t\right)=0} \\ {R_{4} \left(t\right)-iT'_{4} \left(t\right)=0\, \, ,\, \, R_{1} \left(t\right)+iT'_{1} \left(t\right)=0\, \, ,\, \, R_{2} \left(t\right)-iT'_{2} \left(t\right)=0} \\ {R_{4} \left(t\right)+iT'_{4} \left(t\right)=0\, \, ,\, \, R_{3} \left(t\right)-iT'_{3} \left(t\right)=0\, \, ,\, \, R_{2} \left(t\right)+iT'_{2} \left(t\right)=0} \\ {R_{1} \left(t\right)-iT'_{1} \left(t\right)=0\, \, ,\, \, R_{4} \left(t\right)-iT'_{4} \left(t\right)=0\, \, ,\, \, \, R_{3} \left(t\right)+iT'_{3} \left(t\right)=0} \\ {R_{5} \left(t\right)+iT'_{5} \left(t\right)=0\, \, ,\, \, R_{6} \left(t\right)-iT'_{6} \left(t\right)=0} \end{array}\] 
	Then, we get the following system.
	\[\begin{array}{l} {R_{5} \left(t\right)+iT'_{5} \left(t\right)=0\, \, } \\ {R_{6} \left(t\right)-iT'_{6} \left(t\right)=0} \end{array}\] 
	and hence,
	\[\left\{\begin{array}{l} {\left[Q_{1} \left(t\right)+P_{1} \left(t\right)K'\left(t\right)-P_{2} ^{{'} } \left(t\right)\right]+i\left[Q_{2} \left(t\right)+P_{2} \left(t\right)K'\left(t\right)+P_{1} ^{{'} } \left(t\right)\right]=0} \\ {\left[Q_{1} \left(t\right)+P_{1} \left(t\right)K'\left(t\right)-P_{2} ^{{'} } \left(t\right)\right]-i\left[Q_{2} \left(t\right)+P_{2} \left(t\right)K'\left(t\right)+P_{1} ^{{'} } \left(t\right)\right]=0} \end{array}\right. \] 
	and hence,
	\[\left\{\begin{array}{l} {Q_{1} \left(t\right)+P_{1} \left(t\right)K'\left(t\right)-P_{2} ^{{'} } \left(t\right)=0} \\ {Q_{2} \left(t\right)+P_{2} \left(t\right)K'\left(t\right)+P_{1} ^{{'} } \left(t\right)=0} \end{array}\right. \] 
	\begin{equation} \label{27)} 
	\left\{\begin{array}{l} {P'\left(t\right)=U_{c} \left(t,t\right)P\left(t\right)} \\ {-\int _{t}^{\frac{\pi }{2} }U_{s} \left(x,t\right)Q\left(x\right)dx-\int _{t}^{\frac{\pi }{2} }\left(K'\left(t\right)U_{s} \left(x,t\right)+\frac{\partial H_{s} \left(x,t\right)}{\partial t} \right)P\left(x\right)dx  } \\ {} \\ {P\left(t\right)=-\int _{t}^{\frac{\pi }{2} }P'\left(x\right)dx } \\ {} \\ {Q\left(t\right)=-\left(K'\left(t\right)+U_{s} \left(t,t\right)\right)P\left(t\right)} \\ {-\int _{t}^{\frac{\pi }{2} }U_{c} \left(x,t\right)Q\left(x\right)dx-\int _{t}^{\frac{\pi }{2} }\left(K'\left(t\right)U_{c} \left(x,t\right)-\frac{\partial H_{s} \left(x,t\right)}{\partial t} \right)P\left(x\right)dx  } \end{array}\right.  
	\end{equation} 
	If we mark this
	\[S\left(t\right)=\left(Q\left(t\right),P\left(t\right),P'\left(t\right)\right)^{T} \] 
	and
	\[K\left(x,t\right)=\left(\begin{array}{ccc} {U_{c} \left(x,t\right)} & {K'\left(t\right)U_{c} \left(x,t\right)-\frac{\partial U_{s} \left(x,t\right)}{\partial t} } & {-\left(K'\left(t\right)+U_{s} \left(t,t\right)\right)} \\ {0} & {0} & {1} \\ {U_{s} \left(x,t\right)} & {K'\left(t\right)U_{s} \left(x,t\right)+\frac{\partial U_{s} \left(x,t\right)}{\partial t} } & {U_{c} \left(x,t\right)} \end{array}\right)\] 
	Equations $\left(2.13\right)$ can be reduced to a vector from
	\begin{equation} \label{28)}
S\left(t\right)+\int _{t}^{\frac{\pi }{2} }K\left(x,t\right)S\left(x\right)dx=0  
	\end{equation}
 for $0<t<\frac{\pi }{2} $.
	
 Since the equation $\left(2.14\right)$is a homogenous Volterra integral equations. Equation $\left(2.14\right)$ only has the trivial solution. Thus, we obtain 
	
	\noindent $S\left(t\right)=0$ for $0<t<\frac{\pi }{2} $.
	
	\noindent This gives us 
	
	\noindent $Q\left(t\right)=P\left(t\right)=0$ for $0<t<\frac{\pi }{2} $.
	
	\noindent Thus, we obtain $q\left(x\right)=\tilde{q}\left(x\right)$ and $p\left(x\right)=\tilde{p}\left(x\right)$ on $\left(0,\pi \right)$.
	The proof is comleted.
	
\end{proof}

\section*{Acknowledgement} 

Not applicable.
\bibliography{mmnsample}
\bibliographystyle{mmn}

\end{document}